\newtheorem*{mydefn}{Definition}
\newtheorem{prop}{Proposition}[section]
\newtheorem{theorem}{Theorem}[section]
\newtheorem{lemma}{Lemma}[section]
\newtheorem{cor}{Corollary}[section]
\theoremstyle{definition}
\newtheorem{remark}{Remark}[section]
\newtheorem{myxmpl}{Example}[section]
\newcommand{\R}{\mathbb{R}}
\newcommand{\RD}{\mathbb{R}^d}
\newcommand{\cF}{\mathcal{F}}
\newcommand{\cP}{\mathcal{P}}
\newcommand{\tx}{\tilde{x}}
\newcommand{\ty}{\tilde{y}}
\newcommand{\tomega}{\tilde{\omega}}
\newcommand{\scrA}{\mathscr{A}}
\newcommand{\scrU}{\mathscr{U}}
\newcommand{\scrV}{\mathscr{V}}
\newcommand{\scrL}{\mathscr{L}}
\newcommand{\bes}{\begin{equation*}}
\newcommand{\ees}{\end{equation*}}
\newcommand{\beas}{\begin{eqnarray*}}
\newcommand{\eeas}{\end{eqnarray*}}
\newcommand{\bea}{\begin{eqnarray}}
\newcommand{\eea}{\end{eqnarray}}
\newcommand{\be}{\begin{equation}}
\newcommand{\ee}{\end{equation}}
\newcommand{\bbl}{\begin{block}}
\newcommand{\ebl}{\end{block}}
\newcommand{\LBR}{P^{x,y}_{-T,T}}
\newcommand{\LBBR}{W^{x,y}_{-T,T}}
\newcommand{\PMU}{P^{\mu}_{-T,T}}
\newcommand{\OMB}{\Omega^{x,y}_{-T,T}}
\begin{document}

\author{Giovanni Conforti }

\address{Universit\"at Leipzig, Fakult\"at f\"ur Mathematik und Informatik, Augustusplatz 10, 04109 Leipzig, Germany}
\email{giovanniconfort@gmail.com}

\author{Max Von Renesse}
\address{Universit\"at Leipzig, Fakult\"at f\"ur Mathematik und Informatik, Augustusplatz 10, 04109 Leipzig, Germany}
\email{renesse@math.uni-leipzig.de}

\keywords{bridges, reciprocal characteristic,coupling, gradient estimates, Logarithmic Sobolev inequality, comparison principle, Stein method.}
\date{today}
\subjclass[2010]{28D20,47D07,47D08,47A63,60J60}


\title[Quantitative results for bridges]{Couplings, gradient estimates and Logarithmic Sobolev inequalitiy for Langevin bridges}
\maketitle
\begin{abstract}
In this paper we establish quantitative results about the bridges of the Langevin dynamics and the associated reciprocal processes. They include an equivalence between gradient estimates for bridge semigroups and couplings, comparison principles,
bounds of the distance between bridges of different Langevin dynamics, and a Logarithmic Sobolev inequality for bridge measures. The existence of an invariant measure for the bridges is also discussed and quantitative bounds for the convergence to the invariant measure are proved .All results are based on a seemingly new expression of the drift of a bridge in terms of the \textit{reciprocal characteristic}, which, roughly speaking, quantifies the `` mean acceleration" of a bridge.
\end{abstract}

\section{Introduction and statement of the main results}

Bridges of Markov processes are among the basic objects of probability theory: as such, they have been intensively studied. However, quite surprisingly, there is a lack of quantitative results outside the short time regime. The aim of this article is to address this issue in the case when the underlying Markov process is a Langevin dynamics.
The Langevin dynamics $P^{x}_{-T}$ over the time interval $[-T,T]$ is the law of:
\be\label{eq:LD}
dX_{t} = -\nabla U(t,X_t)dt + dB_t, \quad X_{-T} =x \tag{\textbf{LD}(U)}
\ee
The $x y$ bridge $\LBR$ is simply obtained by conditioning $P^x_{-T}$ to the event $\{X_{T}=y\}$.
Our starting point is the observation due to Krener \cite{Kre88} (see also \cite{Blee},\cite{Cl91} ) that the family of bridges $\{P^{x,y}_{-T,T}\}_{x,y \in \RD}$ is uniquely determined by the \textit{reciprocal charachteristic}, which is a vector field derived from $U$. Its $i$-th component is the map $(t,z) \mapsto -[\scrL + \partial_t] (\partial_{z_i}U)(t,z) $, where $\scrL$ is the generator of the Langevin dynamics:
 \be\label{eq:LDgen} \scrL (f) = -\nabla U \cdot \nabla f + \frac{1}{2} \Delta f . \ee 
Another definition, which turns out to be very useful, is to consider the potential $\scrU$ defined by
\be\label{def:reciprocalpot}
\scrU(t,z) := \frac{1}{2} | \nabla U |^2(t,z) - \frac{1}{2} \Delta U(t,z) - \partial_t U(t,z).  
\ee
The reciprocal characteristic is then the field $\nabla \scrU$. To see how it arises naturally in the study of bridges consider the form of the log-density of $P^x_{-T}$ against the Wiener measure:
\[\exp \left(- \int_{-T}^T \nabla U(t,X_t) \cdot d X_t - \frac{1}{2} \int_{-T}^{T} |\nabla U(t,X_t)|^2dt \right) \]
An application of It\^o formula yields the equivalent form
\beas \exp \left(-U(1,X_1)+U(0,X_0) - \int_{-T}^{T}  \frac{1}{2}|\nabla U(t,X_t)|^2 -\frac{1}{2}\Delta U(t,X_t) - \partial_t U(t,X_t) dt \right) \\ 
 \exp\left(-U(1,X_1)+U(0,X_0) - \int_{-T}^{T}  \scrU(t,X_t) dt \right) \eeas
When passing to the density of the \textit{bridge} $P^{x,y}_{-T,T}$ against the Brownian bridge it is reasonable to expect that the term $-U(1,X_1)+U(0,X_0) $ cancels, since it depends only on the endpoints of the trajectory. Indeed it is known that (see \cite[Sec.5]{LeKre93}):
\[  \frac{dP^{x,y}_{-T,T}}{dW^{x,y}_{-T,T}} = Z^{-1}\exp\left(- \int_{-T}^{T}  \scrU(t,X_t) dt \right) \]
where $W^{x,y}_{-T,T}$ is the Brownian bridge and $Z$ a normalization constant.
Such consideration points at the fact that  $ \nabla \scrU$ is the right object to look for a quantitative study of bridges. In fact, we obtain the following result.
\begin{theorem}[Convexity of $\scrU$, couplings and gradient estimate]\label{thm:grest}
Let $U$ be time homogeneous. The following are equivalent for $\alpha>0$:
\begin{enumerate}[(i)]
\item $\scrU$ is $\frac{\alpha^2}{2}$ convex. That is,
\be\label{eq:convass} 
\inf_{z,v \in \RD: |v|=1}\nabla^2 \mathscr{U}(z)[v,v] \geq \alpha^2.
\ee
\item For any $T>0,t \in [-T,T]$ and any smooth function $f$ the following gradient estimates hold:
\be\label{eq:grest}
\forall x \in \RD, \quad | \nabla_x E_{P^{x,y}_{-T,T}} ( f( \omega_t)  ) | \leq \frac{\sinh(\alpha(T-t)) } {\sinh ( 2 \alpha T )} E_{P^{x,y}_{-T,T}} | \nabla f(\omega_t) |
\ee
\be\label{eq:gresttimerev}
\forall y \in \RD, \quad | \nabla_y E_{P^{x,y}_{-T,T}}( f( \omega_t)  ) | \leq \frac{\sinh(\alpha (T+t)) } {\sinh (2 \alpha T )} E_{P^{x,y}_{-T,T}} | \nabla f(\omega_t) |
 \ee
 \item For any  $ x_1,y_1,x_2,y_2 \in \RD$ there exists  a probability space $(\tilde{\Omega},\tilde{\cF},\tilde{P})$ and maps $X^{i}, i=1,2$ defined on it with the property that $X^{i} \# \tilde{P} = P^{x_i,y_i}_{-T,T}$ and 
\bes
\tilde{P}  \left( | X^{1}_t - X^{2}_t| \leq  \frac{\sinh(\alpha(T-t))}{\sinh(2\alpha T)} |x_2-x_1| + \frac{\sinh(\alpha (T+t) ) }{ \sinh(2\alpha T)} |y_2-y_1|  \quad \forall \, t \in [-T,T] \right) =1
\ees
 \end{enumerate}
\end{theorem}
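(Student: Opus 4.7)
I would establish the cycle $(i)\Rightarrow(iii)\Rightarrow(ii)\Rightarrow(i)$. The central ingredient, which is the novelty announced in the abstract and introduction, is a representation of the bridge drift in which the reciprocal characteristic $\nabla\scrU$ plays the role of a ``mean acceleration''. Concretely, if $X$ is an $\LBR$-distributed process with SDE drift $b^{x,y}(t,X_t)$, I expect that $b^{x,y}$ satisfies an identity of the form $(\partial_t + \scrL^{b^{x,y}})b^{x,y}(t,z) = \nabla\scrU(z)$, so that $\nabla\scrU$ appears as the pathwise second time derivative of the bridge. This is the tool that makes the synchronous coupling close.

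\paragraph{$(i)\Rightarrow(iii)$.} Couple two bridges $X^1,X^2$ by driving them with the same Brownian motion. Under this synchronous coupling the stochastic increments cancel, and the difference $\Delta_t := X^1_t - X^2_t$ obeys a pathwise equation governed only by the difference of drifts. Differentiating twice in $t$ and using the identity above together with (i), one derives a pathwise second-order inequality of the form
\begin{equation*}
\frac{d^2}{dt^2}|\Delta_t| \;\geq\; \alpha^2\,|\Delta_t|
\end{equation*}
in a suitable (viscosity / Gronwall) sense, with boundary data $|\Delta_{-T}| = |x_1-x_2|$ and $|\Delta_T| = |y_1-y_2|$. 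Comparison with the explicit solution of $\ddot u = \alpha^2 u$ having the same boundary values, namely
\begin{equation*}
u(t) \;=\; \frac{\sinh(\alpha(T-t))}{\sinh(2\alpha T)}|x_1-x_2| + \frac{\sinh(\alpha(T+t))}{\sinh(2\alpha T)}|y_1-y_2|,
\end{equation*}
yields the sinh-kernel bound asserted in (iii).

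\paragraph{$(iii)\Rightarrow(ii)$.} Fix $y_1=y_2=y$ and take $x_2 = x_1 + \varepsilon h$ with $|h|=1$. For a smooth function $f$, the fundamental theorem of calculus gives
\begin{equation*}
E_{\LBR^{x_1,y}}\!f(\omega_t) - E_{\LBR^{x_2,y}}\!f(\omega_t) \;=\; \tilde{E}\!\int_0^1 \nabla f\!\left(X^2_t + s(X^1_t - X^2_t)\right)\cdot (X^1_t - X^2_t)\,ds.
\end{equation*}
Dividing by $\varepsilon$ and applying the deterministic coupling bound from (iii), we pass to the limit $\varepsilon \to 0$: by continuity of $\nabla f$ and the a.s.\ convergence $X^2_t \to X^1_t$ provided by the coupling, dominated convergence produces the gradient estimate \eqref{eq:grest}. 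The symmetric estimate \eqref{eq:gresttimerev} follows identically by varying $y$ instead of $x$.

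\paragraph{$(ii)\Rightarrow(i)$ and main obstacle.} For the converse, I would extract (i) from the gradient estimate by a short-time asymptotic analysis: applying \eqref{eq:grest} with $t=0$ and letting $T\to 0$, the coefficient $\sinh(\alpha T)/\sinh(2\alpha T) \to 1/2$ to leading order, while the next-order correction in $T$ exposes the Hessian $\nabla^2 \scrU$ via Taylor expansion of both sides along the deterministic midpoint trajectory; matching the $O(T^2)$ terms against arbitrary test functions $f$ recovers the Hessian bound \eqref{eq:convass}. The principal obstacle is the first implication: making the ``second-derivative'' identity for the bridge drift rigorous enough to support a pathwise differential inequality on $|\Delta_t|$ requires a careful choice of representation of $b^{x,y}$ (one that is finite even where $\log h$ blows up as $t\uparrow T$), which is presumably where the separate drift-formula section of the paper is invoked.
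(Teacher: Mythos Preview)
Your implication $(iii)\Rightarrow(ii)$ is fine and matches the paper. The other two implications have real problems.

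\textbf{The gap in $(i)\Rightarrow(iii)$.} Your second-order approach does not close. Under the synchronous coupling, $\Delta_t=X^1_t-X^2_t$ is indeed of finite variation with $\dot\Delta_t=b^{x_1,y_1}(t,X^1_t)-b^{x_2,y_2}(t,X^2_t)$. But when you differentiate again you must apply It\^o's formula to each $b^{x_i,y_i}(t,X^i_t)$, and this produces
\[
d\dot\Delta_t=\big[\nabla\scrU(X^1_t)-\nabla\scrU(X^2_t)\big]\,dt+\big[\nabla b^{x_1,y_1}(t,X^1_t)-\nabla b^{x_2,y_2}(t,X^2_t)\big]\,dB_t.
\]
The martingale term does \emph{not} vanish: the Jacobians of the two drifts differ (they are different functions when $y_1\neq y_2$, and are evaluated at different points when $X^1_t\neq X^2_t$). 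So $|\Delta_t|$ has no pathwise second derivative, and there is no ``viscosity / Gronwall'' comparison principle for a stochastic second-order inequality with \emph{two-point} boundary data. The heuristic you wrote down is exactly the deterministic picture the paper discusses in its heuristics section, but the proof follows a different route: one first shows (via the drift representation of Lemma~\ref{genhtrans2} and a Pr\'ekopa--Leindler argument comparing with the Ornstein--Uhlenbeck bridge) that convexity of $\scrU$ implies the \emph{first-order} bound
\[
\nabla^2 U^{y}_T(t,z)[v,v]\;\ge\;\alpha\coth(\alpha(T-t)).
\]
Synchronous coupling with \emph{fixed} terminal point $y$ then gives the first-order inequality $\frac{d}{dt}|\Delta_t|^2\le -2\alpha\coth(\alpha(T-t))|\Delta_t|^2$, which integrates to the sinh bound in the $x$-variable. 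The $y$-variable bound is obtained separately by time reversal (Lemma~\ref{lemma:timerev}), and the two are glued. You are missing both the Pr\'ekopa--Leindler step (which is where the real work lies) and the two-stage construction of the coupling.

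\textbf{The gap in $(ii)\Rightarrow(i)$.} ``Matching the $O(T^2)$ terms'' is not a proof; in particular, a generic $f$ will not isolate $\nabla^2\scrU$. The paper takes $f(z)=v\cdot z$ linear, differentiates the gradient estimate at $t=-T$ to obtain $\nabla^2 U^y_T(-T,x)[v,v]\ge \alpha\coth(2\alpha T)$, and then uses the representation of Lemma~\ref{genhtrans2} plus the explicit translation covariance of the OU bridge (Lemma~\ref{lemma:OUbridge}(ii)) to rewrite this as an inequality involving $\nabla^2\scrU$ integrated against the bridge; the short-time limit (Lemma~\ref{lemma:OUbridge}(iii)) with $y=x$ then collapses the integral onto $\nabla^2\scrU(x)[v,v]$. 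None of these specific reductions appear in your outline.
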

In the statement of the Theorem and in the rest of the paper we use $\#$ for the push forward of probablity measures and we write $\nabla^2f(t,z)[v,v]$ for $\partial_v \partial_v f(t,z)$. \\

The main difficulty in dealing with bridges is that even though it is well known that the bridge is itself a Langevin dynamics, its time-dependent potential $U^{y}_T$ cannot be computed in closed form and one then has to choose a convenient representation to work with. One of the novelties of this article is a new representation of the bridge-potential $U^y_T$, where the role of $\scrU$  is most transparent. This is not the case for the well known $h$-transform decomposition (which goes back to Doob \cite{Doob1957}). The idea behind our representation is the following: instead of looking how to modify $-\nabla U$ to get the bridge drift $-\nabla U^y_T$, which is the $h$-transform approach, we look how to correct the drift of the \textit{Brownian bridge} to get $- \nabla U^y_T$.
The advantages of doing this is that it allows to bound first and second derivatives of $U^y_T$ using information on the corresponding derivatives of $\scrU$ by means of functional inequalities, namely Pr\'ekopa Leindler inequality. We shall use the bound on the first derivative to derive pathwise comparison principles for the bridges of different Langevin dynamics. 
Concerning the second derivative of $\scrU$,  we show at Theorem \ref{thm:grest} that convexity bounds on $\scrU$ are equivalent to gradient estimates along bridge semigroups with respect to both the initial and final point and to the existence of coupling between bridges with different endpoints in which the distance between trajectories decreases fastly. This result has to be compared with the well-known equivalence between the $\Gamma_2$ condition of Bakry and \'Emery, gradient estimates and couplings, see e.g. \cite[Thm 3.2.3]{bakry2013analysis} and \cite[Thm 1 and Cor 2]{von2005transport}.
In Theorem \ref{thm:logsob} we also show how the convexity of $\scrU$ implies a Logarithmic Sobolev inequality on path space. Such inequalities have been investigated on the setting of Lie groups in \cite{gross1991logarithmic}, and they often involve some extra "potential terms"( other authors have also investigated "heat kernel" logarithmic Sobolev inequalities on Loop groups, see \cite{driver1996logarithmic}). In our simpler setting, by choosing an appropriate norm for the Malliavin derivative, we can get rid of potential terms. In Corollary \ref{cor:conc} we deduce from this inequality some useful concentration of measure bounds for bridge measures, which were already partially obtained in \cite{conforti2016fluctuations}.
	We also investigate a natural notion of invariant measure for bridges, related to the concept of Gibbs measure on path space, in the spirit of \cite{priouret1975processus},\cite{doss1978processus} (see \cite{lorinczi2011feynman} for a recent account), which is to look for the limit as $T \uparrow +\infty$ of the marginal law at $t=0$ of $\LBR$. We provide a criterion for existence which, interestingly enough, shows that such an invariant measure may exist even in cases when the non-pinned process does not admit one. Contraction estimates in Wasserstein distance for the convergence to the invariant measure are also proved.
	The last question we address is that of quantifying how close are the bridge on an arbitrary Langevin dynamics and the Brownian bridge. We answer this question using the principles of Stein's method: in particular, to construct and solve Stein's equation we use the fact that bridges are the invariant measure of certain SPDEs (as pointed out in \cite{barbour1990stein},\cite{reznikoff2005invariant},\cite{hairer2007analysis}).
Let us give an overview of the main results beyond Theorem \ref{thm:grest}. We make their proof in Section 2.

\subsection{Main assumptions}  Let us first detail the main assumptions on $U$ and recall few basic notions. Our main object of study are bridges. The bridge of the Langevin dynamics between $x$ and $y$  over the time-interval $[-T,T]$ is the measure on the space $\Omega^{x,y}_{-T,T} := \{ \omega \in C([-T,T];\RD) :\omega_{-T} =x,\omega_{T} =y\}$  obtained by conditioning:
\bes
\forall A \, \text{measurable}\, A \subset \Omega^{x,y}_{-T,T}, \quad \LBR( \omega \in A) = P^x_{-T}(\omega \in A | \, \omega_{T} =y)
\ees
where $P_{-T}^x$ is the law of the Langevin dynamics whic starts in $x$ at $-T$.
We assume throughout the whole paper that  $U$ is four times countinuously differentiable in the space variable and continously differentiable in the time variable. Moreover it holds that
\be\label{eq:H}
\lim_{|z| \rightarrow +\infty} |U(-T,z)|=\lim_{|z| \rightarrow +\infty} |U(T,z)|=+\infty, \quad  \inf_{t \in [-T,T], z \in \RD} \scrU(t,z) > -\infty \tag{\textbf{H}}
\ee
A minor modification of the proof of Theorem 2.2.19 of \cite{royer2007initiation} shows  that under the current hypothesis, the solution of \eqref{eq:LD} exists and does not explode almost surely. Moreover, $P^x_{-T}$ admits a transition density which is everywhere positive, so that the bridges are well defined for any pair $x,y$ and not just in the almost-sure sense.

\subsection{A new representation of the drift}
It is well known that $P^{x,y}_{-T,T}$ is also a Langevin dynamics, i.e. there exist a time-dependent potential $U^{y}_T$  such that the law of:
\be\label{eq:BR}
dX_{t} = -\nabla U^{y}_{T}(t,X_t)dt + dB_t, \quad X_{-T} =x \tag{\textbf{BR}(U)}
\ee
is $\LBR$. According to the $h$-transform method, $U^{y}_{T}$ admits the following representation (see \cite[Th. 2]{Jam75}):
\bes
\begin{cases}
U^{y}_{T}(t,z) = U(t,z)- \log h(t,z) \\
\partial_t h(t,z) +\frac{1}{2} \Delta h(t,z) -\nabla U \cdot \nabla h(t,z) =0 \\ \lim_{t \uparrow T} h(t,z) = \delta_{y} 
\end{cases}
\ees
 This representation tells how one shall modify the drift of $P^x_{-T}$ to get the drift of $P^{x,y}_{-T,T}$. If we adopt a different viewpoint, i.e. we ask how one should correct the drift of the \textit{Brownian bridge} $W^{x,y}_{-T,T}$ to get the drift of $P^{x,y}_{-T,T}$, we arrive at a new representation formula, where the role of $\nabla \scrU$ is very clear. To state the Lemma we introduce  some notation, which we use throughout the article: $|\cdot|$ stands for the Euclidean norm on $\RD$, and $W^{z,y}_{t,T}$ is the Brownian bridge which starts at $t$
in $z$ and ends at $T$ in $y$.
\begin{lemma}\label{genhtrans}
$P^{x,y}_{-T,T}$ is a Langevin dynamics, whose drift $- \nabla U^{y}_{T}$ is given by:
\be\label{eq:driftreprs}
- \nabla U^{y}_{T}(t,z)  = -\nabla H^y_T(t,z)+ \nabla \log \psi(t,z) 
\ee
with
\be \label{def:h}
H^y_T(t,z) = \frac{|y-z|^2}{2(T-t)}
\ee 
and
\be\label{def:psi} 
 \psi(t,z)= E_{W^{z,y}_{t,T}} 
\left( \exp \Big(-\int_{t}^{T}  \mathscr{U}(s,\omega_s) ds \Big) \right)  
\ee
\end{lemma}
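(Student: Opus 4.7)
The plan is to identify the drift of $P^{x,y}_{-T,T}$ by writing the Radon-Nikodym density of $P^{x,y}_{-T,T}$ against the Brownian bridge in Girsanov exponential form. I would start from the identity
\[
\frac{dP^{x,y}_{-T,T}}{dW^{x,y}_{-T,T}} = Z^{-1}\exp\!\left(-\int_{-T}^T \scrU(t,\omega_t)\,dt\right),
\]
recalled in the introduction, and restrict it to $\cF_s$ for $s \in [-T,T)$. Using the Markov property of the Brownian bridge to evaluate the conditional expectation of the tail factor in terms of a new Brownian bridge started at $(s,X_s)$ yields
\[
M_s := \left.\frac{dP^{x,y}_{-T,T}}{dW^{x,y}_{-T,T}}\right|_{\cF_s} = Z^{-1}\exp\!\left(-\int_{-T}^s \scrU(t,X_t)\,dt\right)\psi(s,X_s),
\]
with $\psi$ as in \eqref{def:psi}.

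Next I would invoke Feynman-Kac: since $\psi(t,z) = E_{W^{z,y}_{t,T}}\exp\bigl(-\int_t^T \scrU(s,\omega_s)\,ds\bigr)$, it solves the terminal-value problem
\[
\partial_t \psi(t,z) + \tfrac{y-z}{T-t}\cdot\nabla\psi(t,z) + \tfrac{1}{2}\Delta\psi(t,z) = \scrU(t,z)\,\psi(t,z), \qquad \psi(T,\cdot) = 1.
\]
Applying It\^o's formula to $\log M_s$ along the Brownian bridge SDE $dX_t = \tfrac{y-X_t}{T-t}\,dt + dW_t$, the finite-variation part coming from $\log\psi$ is $\frac{1}{\psi}\bigl(\partial_t\psi + \tfrac{y-z}{T-t}\cdot\nabla\psi + \tfrac{1}{2}\Delta\psi\bigr) - \tfrac{1}{2}|\nabla\log\psi|^2$. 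By the PDE this reduces to $\scrU - \tfrac{1}{2}|\nabla\log\psi|^2$, and the $\scrU$ contribution exactly cancels the explicit $-\int\scrU\,dt$ term, leaving
\[
\log M_s = \int_{-T}^s \nabla\log\psi(t,X_t)\cdot dW_t - \tfrac{1}{2}\int_{-T}^s |\nabla\log\psi(t,X_t)|^2\,dt.
\]

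This is a Girsanov exponential, so Girsanov's theorem yields a $P^{x,y}_{-T,T}$-Brownian motion $\tilde W$ for which
\[
dX_t = \Bigl(\tfrac{y-X_t}{T-t} + \nabla\log\psi(t,X_t)\Bigr)dt + d\tilde W_t = \bigl(-\nabla H^y_T(t,X_t) + \nabla\log\psi(t,X_t)\bigr)dt + d\tilde W_t,
\]
and reading off $-\nabla U^y_T$ from the drift gives \eqref{eq:driftreprs}.

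The main obstacle will be the technical justification rather than the algebra: one must verify that $\psi$ is sufficiently regular ($C^{1,2}$ with the stated PDE) despite the Brownian-bridge drift being singular as $t\uparrow T$, and that a Novikov- or localization-type condition holds so that the Girsanov change of measure is valid on the full interval $[-T,T]$. The lower bound on $\scrU$ given by \eqref{eq:H} and the smoothness hypotheses on $U$ should handle both issues, but these checks are where the real work lies.
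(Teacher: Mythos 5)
Your proposal is correct and follows essentially the same route as the paper: the paper also starts from the Girsanov density of the bridge against a reference bridge, projects onto $\cF_t$ to expose $\psi(t,\omega_t)$, identifies the Feynman--Kac PDE satisfied by $\psi$, and applies It\^o/Girsanov to read off the drift. The only cosmetic differences are that the paper proves the more general two-potential version (Lemma~\ref{genhtrans2}, with an arbitrary $P^{1,x,y}_{-T,T}$ as reference rather than $W^{x,y}_{-T,T}$) and then specializes, and that it applies It\^o to $M_t$ directly and concludes via the martingale problem rather than applying It\^o to $\log M_t$ and invoking Girsanov in exponential form.
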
 

\begin{remark}
\begin{itemize}
\item  Even though $\psi$ depends on $\scrU$ itself, the drift of $\LBR$, $-\nabla U^y_T$, depends only on $\nabla \scrU$. One can see this by plugging \eqref{eq:driftreprs} $\scrU+c$ into  and seeing that the \eqref{eq:driftreprs} does not depend on $c$.
\item $-\nabla H^y_T$ is indeed the drift of the Brownian bridge
\end{itemize}
\end{remark}
\subsection{Convexity of $\scrU$, couplings and gradient estimates}
 The main result is Theorem \ref{thm:grest} above, where we show the equivalence between convexity bounds for $\scrU$, a gradient estimate along the time-inhomogenous bridge semigroup, and the existence of certain couplings between bridges with different endpoints. In the gradient estimate, we consider both perturbations of the initial and final state of the bridge. Some remarks are in order.
\begin{remark}
\begin{itemize}
\item
Under the hypothesis of Theorem \ref{thm:grest}, we prove in Theorem \ref{theorem:invmeas} that there exists a potential $V$ satisfying $\nabla^2V(z)[v,v] \geq \alpha $ and such that the associated Langevin dynamics, which we denote $Q^x_{-T}$, share the bridges with $P^x_{-T}$, i.e. \[ \forall T>0, \, x,y \in \RD \quad Q^{x,y}_{-T,T} = P^{x,y}_{-T,T}.\] The classical Bakry \'Emery gradient estimate for $Q^x_0$ reads as 
\be\label{eq:grestbak} \forall x \in \RD, \quad | \nabla_x E_{Q^{x}_{0}} ( f( \omega_t)  ) | \leq \exp(-\alpha t) E_{Q^{x}_{0}} | \nabla f(\omega_t) | \ee
It is worth noticing that we can obtain the same result by using Theorem \ref{thm:grest}. Fix an arbitrary point $y$ and $t>0$. By riparametrizing time (i.e. by considering bridges over $[0,2T]$ instead $[-T,T]$) we can rewrite the estimate \eqref{eq:grest} as:
\[\forall x \in \RD, \quad | \nabla_x E_{Q^{x,y}_{0,2T}} ( f( \omega_t)  ) | \leq \frac{\sinh(\alpha( 2T - t)) } {\sinh ( 2 \alpha T )} E_{Q^{x,y}_{0,2T}} | \nabla f(\omega_t) |
  \]
It is easy to see that, as $T \rightarrow +\infty$ we have that $\frac{\sinh(\alpha( 2T - t)) } {\sinh ( 2 \alpha T )}$ converges to $\exp(-\alpha t)$.  Moreover, a slight modification of the proof of \eqref{eq:invconvest} in Theorem \ref{theorem:invmeas} shows that
\be\label{eq:smgrpconv} \forall g \ \text{bounded and measurable}, \quad  \lim_{T \rightarrow + \infty}  E_{Q^{x,y}_{0,2T}}(g (\omega_t))= E_{Q^{x}_{0}}( g(\omega_t) ) \ee
Combining these two observations, we get back the usual gradient estimate \eqref{eq:grestbak}. Therefore, our result is consistent with the Bakry \'Emery theory, when the two sets of hypothesis intersect. 

\item It is rather tempting to say that \eqref{eq:smgrpconv} is valid for any potential $V$, and not just for convex ones.
 The intuition behind this should be that when $T$ is very large the conditioning $\{X_{2T}=y\}$ has almost no effect at time $t$. However this is false in general. A careful look at the proof of  Theorem \ref{theorem:invmeas} shows that it is true 
 only if the non-pinned Langevin dynamics associated with $V$ admits an invariant measure.
 
 \item The equivalence between point $(i)$ and $(ii)$ can be easily be extended to the time dependent case.
 \end{itemize}
\end{remark}
\subsection{Convexity of $\scrU$ and Logarithmic Sobolev inequality}
Here we present a Logarithmic Sobolev inequality on path space (LSI for short). For simplicity, we set $x=y=0$.
Let us consider the Hilbert space $H = L^2([-T,T],\RD) $, and denote by $\langle.,.\rangle_{H}$ the usual scalar product on it.
For $\alpha>0$ we introduce the scalar product $\langle.,.\rangle_{\alpha}$ on $H$ defined by
$\langle h,g \rangle_{\alpha}:= \langle \varphi,g \rangle_H$, where $\varphi$ solves
\be\label{eq:ODEforh}
\varphi_t -\alpha^2 \int_{-T}^t\int_{-T}^s \varphi_r dr ds +  \frac{\alpha^2}{2T} \int_{-T}^{T}\int_{-T}^v\int_{-T}^u \varphi_r dr du dv  = h_t -\frac{1}{2T}\int_{-T}^{T} h_u du , \quad t \in [-T,T]
\ee
Note that since $h$ and $\varphi$ are $\RD$ valued, the equation above has to be understood componentwise.
When $h$ is smooth, $\varphi$ is the solution of the ODE
\bes
\begin{cases} \ddot{\varphi}_t - \alpha^2 \varphi_t = \ddot{h}_t \\
\varphi_{-T} =h_{-T} \\
\int_{-T}^{T}\varphi_t dt =0 
\end{cases}
\ees

Next we say, as usual, that $F : \Omega^{0,0}_{-T,T} \rightarrow \RD$ is a simple functional if
\bes
F(\omega) = f\Big( \int_{-T}^{T} h^1_t \cdot d\omega_t,..,\int_{-T}^{T} h^n_t \cdot d \omega_t \Big)
\ees
for a smooth function $f$ with bounded derivatives and finitely many $h^1,..,h^n\in H$.
Its (Malliavin) derivative  is the $H$-valued random variable:
\be\label{def:Mallder}
DF(\omega) = \sum_{i=1}^n \partial_{i}f\Big(\int_{-T}^{T} h^1_t \cdot d\omega_t,..,\int_{-T}^{T} h^n_t d \cdot \omega_t \Big) h^i
\ee
where $\partial_if$ stands for the derivative of $f$ w.r.t. the i-th coordinate .

We obtain the following result.
\begin{theorem}\label{thm:logsob}
 Let $\scrU$ satisfy the convexity assumption \eqref{eq:convass}. Then $P^{0,0}_{-T,T}$ satisfies the following inequality:
\be\label{logsob}
E_{P^{0,0}_{-T,T}}(F \log F) - E_{P^{0,0}_{-T,T}}(F)\log E_{P^{0,0}_{-T,T}}(F) \leq 2\, E_{P^{0,0}_{-T,T}}\Big(\frac{1}{F}\langle D F , DF \rangle_{\alpha}\Big)
\ee
for any positive simple functional $F$. 
\end{theorem}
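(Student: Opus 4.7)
The strategy is to view $P^{0,0}_{-T,T}$ as a uniformly log-concave perturbation of the Gaussian measure $W^{0,0}_{-T,T}$ on path space and then apply an infinite-dimensional Bakry-\'Emery argument; the specific form of the $\alpha$-inner product will emerge from an explicit computation. Recall that $\frac{dP^{0,0}_{-T,T}}{dW^{0,0}_{-T,T}} \propto \exp(-V(\omega))$ with $V(\omega) = \int_{-T}^T \scrU(t,\omega_t)\,dt$, and that the Brownian bridge is a centered Gaussian with Cameron-Martin space $H_{CM} = H^1_0([-T,T];\RD)$ and norm $\|\xi\|_{CM}^2 = \int_{-T}^T |\dot\xi_t|^2\,dt$. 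For every $\xi \in H_{CM}$ the convexity assumption \eqref{eq:convass} yields
\[ \partial_\xi^2 V(\omega) \;=\; \int_{-T}^T \nabla^2\scrU(t,\omega_t)[\xi_t,\xi_t]\,dt \;\geq\; \alpha^2 \int_{-T}^T |\xi_t|^2\,dt, \]
so the Cameron-Martin Hessian of the log-density of $P^{0,0}_{-T,T}$ against (formal) Lebesgue on path space is bounded below by $\langle M\xi,\xi\rangle_{CM}$, where $M = \operatorname{Id}+\alpha^2 B$ and $B = (-\partial_t^2)^{-1}_{\mathrm{Dir}}$ is the inverse Dirichlet Laplacian on $[-T,T]$.

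To turn this into a rigorous statement I would approximate $P^{0,0}_{-T,T}$ by finite-dimensional log-concave measures $\mu_N$ (for instance, through a Karhunen-Lo\`eve truncation of the driving noise or a time discretization of the bridge SDE). In each approximation the finite-dimensional log-density inherits the uniform Hessian lower bound from the restriction $M_N$ of $M$, so the classical finite-dimensional Bakry-\'Emery LSI yields
\[ \mathrm{Ent}_{\mu_N}(F) \;\leq\; 2\, E_{\mu_N}\!\bigl(F^{-1}\langle M_N^{-1}\nabla F,\nabla F\rangle_{CM}\bigr) \]
with a constant uniform in $N$. Since the cylindrical simple functionals of \eqref{def:Mallder} form a core and both sides are lower-semicontinuous under the natural mode of convergence, the estimate passes to the bridge measure $P^{0,0}_{-T,T}$.

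It remains to identify $\langle M^{-1}\nabla F,\nabla F\rangle_{CM}$ with $\langle DF,DF\rangle_\alpha$. Integrating by parts, the Cameron-Martin gradient $\nabla F\in H_{CM}$ of a simple functional is related to its Malliavin derivative via $\partial_t \nabla F = DF - \overline{DF}$, where $\overline{DF}$ denotes the time average. Setting $G = M^{-1}\nabla F$ and solving the Dirichlet problem $-\partial_t^2\eta = G$ with $\eta(\pm T) = 0$ explicitly, then differentiating the identity $G + \alpha^2\eta = \nabla F$, one checks that $\varphi := \partial_t G$ satisfies exactly the integral equation \eqref{eq:ODEforh} with $h = DF$. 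Consequently $\langle M^{-1}\nabla F,\nabla F\rangle_{CM} = \int_{-T}^T \varphi\cdot DF\,dt = \langle DF,DF\rangle_\alpha$, which yields \eqref{logsob}. The main obstacle I anticipate is the rigorous infinite-dimensional Bakry-\'Emery step: $V$ is not Fr\'echet-smooth on the support of the Brownian bridge, so one must carefully work on a cylindrical core and verify that the uniform Hessian lower bound is preserved by each finite-dimensional approximation and survives the passage to the limit. The uniform convexity of $\scrU$ is precisely what makes this control possible.
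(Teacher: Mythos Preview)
Your approach is correct in spirit and reaches the same conclusion, but the paper proceeds by a markedly shorter and more direct route. Rather than setting up an infinite-dimensional Bakry--\'Emery scheme against the Brownian bridge and then having to invert the operator $M=\mathrm{Id}+\alpha^2(-\partial_t^2)^{-1}_{\mathrm{Dir}}$ and manage an approximation/limit procedure, the paper uses the \emph{Ornstein--Uhlenbeck bridge} ${}^{\alpha}P^{0,0}_{-T,T}$ as reference. A short lemma (built on an integration-by-parts characterization) shows that ${}^{\alpha}P^{0,0}_{-T,T}$ is the centred Gaussian on loop space whose covariance is precisely $\langle\cdot,\cdot\rangle_\alpha$; this is what makes the $\alpha$-inner product appear without any operator inversion. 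For a simple functional $F=f\bigl(\int h^1\cdot d\omega,\dots,\int h^n\cdot d\omega\bigr)$ one may assume $\langle h^i,h^j\rangle_\alpha=\delta_{ij}$, so the pushforward under ${}^{\alpha}P^{0,0}_{-T,T}$ is standard Gaussian on $\R^n$; Pr\'ekopa--Leindler (applied exactly as in the coupling lemma) then shows that the pushforward under $P^{0,0}_{-T,T}$ has log-concave density against it. The classical $\R^n$ LSI with constant $2$ gives \eqref{logsob} immediately.

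What this buys: the paper never leaves the finite-dimensional projection dictated by the simple functional itself, so no limiting argument is needed, and the identification you carry out by solving $G+\alpha^2\eta=\nabla F$ and matching \eqref{eq:ODEforh} is replaced by the single observation that $\langle\cdot,\cdot\rangle_\alpha$ is the OU-bridge covariance. Your route, by contrast, is closer to the standard Bakry--\'Emery machinery on Wiener space and would generalize more readily to situations where no convenient Gaussian reference with the right covariance is available; the price is exactly the technical point you flag, namely making the approximation and limit rigorous.
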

\begin{remark}
It is interesting to compare this inequality with the LSI on path for Brownian motion, see e.g. \cite{capitaine1997martingale} in the case $\alpha=0,d=1$.
For the non pinned Brownian motion, the Mallivain derivative operator is the one that we would get by using the scalar product
$$\langle h,g\rangle_H  = \int_{-T}^{T}  h_t  g_t dt$$ 
and then repeating the construction above.
For the Brownian bridge it follows from a simple calculation that the scalar product $\langle \cdot,\cdot \rangle_0$ i
$$ \langle h,g \rangle_0  = \int_{-T}^{T} h_t  g_t dt - \int_{-T}^{T} h_t dt \int_{-T}^{T} g_t dt $$
\end{remark}
A consequence of the inequality is the following concentration of measure estimate which has already been obtained in \cite[Thm 2.1]{conforti2016fluctuations} with different methods.
\begin{cor}\label{cor:conc}
In the hypothesis of Theorem \ref{thm:logsob} the following estimate holds for any 1-Lispchitz function, $t \in [-T,T]$ and $R>0$:
\bes
P^{0,0}_{-T,T}\Big(f(\omega_t) - E_{P^{0,0}_{-T,T}}(f(\omega_t)) \geq R \Big) \leq \exp \Big(-\xi_{\alpha}(t) R^2 \Big)
\ees
with
\bes
\xi_{\alpha}(t)=\frac{  \alpha \sinh(2\alpha T)}{2 \sinh(\alpha(T+t)) \sinh(\alpha(T-t))}
\ees
\end{cor}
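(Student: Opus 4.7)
The plan is to derive the concentration estimate from the log-Sobolev inequality \eqref{logsob} by the classical Herbst argument. Fix a $1$-Lipschitz $f$ (smooth with bounded derivatives, by density) and set $G(\omega) := f(\omega_t) - E_{P^{0,0}_{-T,T}}(f(\omega_t))$. Since $\omega_{-T}=0$ under $P^{0,0}_{-T,T}$, one has $\omega_t^j = \int_{-T}^T e_j\mathbf{1}_{[-T,t]}(s)\cdot d\omega_s$, so $f(\omega_t)$ is a simple functional in the sense of \eqref{def:Mallder} with Malliavin derivative $D[f(\omega_t)](s) = \nabla f(\omega_t)\,\mathbf{1}_{[-T,t]}(s)$. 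The map $h \mapsto \varphi$ defined by \eqref{eq:ODEforh} is linear and acts componentwise on $\mathbb{R}^D$-valued $h$, so
\[
\langle D[f(\omega_t)], D[f(\omega_t)]\rangle_\alpha = |\nabla f(\omega_t)|^2 K_\alpha(t) \le K_\alpha(t),\quad K_\alpha(t) := \langle \mathbf{1}_{[-T,t]}, \mathbf{1}_{[-T,t]}\rangle_\alpha,
\]
where the right-hand scalar product is the scalar version of the one on $H$.

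The Herbst step plugs $F = e^{\lambda G}$ into \eqref{logsob} (the extension from simple functionals being standard, via truncation/mollification of $f$). With $\psi(\lambda) := E_{P^{0,0}_{-T,T}}(e^{\lambda G})$ and $H(\lambda) := \log\psi(\lambda)$, the inequality reads $\lambda\psi'(\lambda) - \psi(\lambda) H(\lambda) \le c\,\lambda^2 K_\alpha(t)\,\psi(\lambda)$ for the constant $c$ appearing in \eqref{logsob}; dividing by $\psi$ yields $(H(\lambda)/\lambda)' \le c K_\alpha(t)$. Integrating from $0$ with $H(0)=0$, $H'(0) = E_{P^{0,0}_{-T,T}}(G)=0$ gives $H(\lambda) \le c K_\alpha(t)\lambda^2$, and a Chernoff bound $P(G\ge R)\le \exp(-\lambda R + H(\lambda))$ optimized in $\lambda>0$ produces
\[
P^{0,0}_{-T,T}(G\ge R)\le \exp\bigl(-R^2/(\kappa K_\alpha(t))\bigr)
\]
for an explicit $\kappa$. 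The proof therefore reduces to showing $K_\alpha(t) = \sinh(\alpha(T+t))\sinh(\alpha(T-t))/(\alpha\sinh(2\alpha T))$, which together with the explicit $\kappa$ recovers $\xi_\alpha(t)$.

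For the scalar test function $h = \mathbf{1}_{[-T,t]}$, differentiating \eqref{eq:ODEforh} twice gives $\ddot\varphi - \alpha^2\varphi = \ddot h = -\delta'_t$, so $\varphi$ has a jump $\varphi(t^-) - \varphi(t^+) = 1$ and solves $\ddot\varphi - \alpha^2\varphi = 0$ away from $t$. Differentiating \eqref{eq:ODEforh} once and using that integrating \eqref{eq:ODEforh} over $[-T,T]$ forces $\int_{-T}^T\varphi = 0$ produces the boundary data $\dot\varphi(\pm T) = 0$; these immediately reduce the piecewise solution to $\varphi(s) = A_1\cosh(\alpha(s+T))$ on $(-T,t)$ and $\varphi(s) = A_2\cosh(\alpha(T-s))$ on $(t,T)$. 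The jump condition and the classical continuity of $\dot\varphi$ at $t$ (since $\dot\varphi = \dot h + \alpha^2\int_{-T}^s\varphi$ is continuous at $t$ once the $\delta_t$ in $\dot h$ is absorbed into the jump of $\varphi$), together with the identity $\sinh(\alpha(T+t))\cosh(\alpha(T-t)) + \cosh(\alpha(T+t))\sinh(\alpha(T-t)) = \sinh(2\alpha T)$, give $A_1 = \sinh(\alpha(T-t))/\sinh(2\alpha T)$ and $A_2 = -\sinh(\alpha(T+t))/\sinh(2\alpha T)$. Integrating $\varphi$ against $h$ on $[-T,t]$ then yields
\[
K_\alpha(t) = \int_{-T}^t \varphi_s\,ds = \frac{\sinh(\alpha(T+t))\sinh(\alpha(T-t))}{\alpha\sinh(2\alpha T)},
\]
as required.

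The main obstacle is the bookkeeping in this last step: \eqref{eq:ODEforh} does not prescribe Dirichlet data at $\pm T$ but rather Neumann-type conditions $\dot\varphi(\pm T) = 0$ coupled to the mean-zero constraint produced by the triple integral, and the jump of $\varphi$ at $t$ has to be read off the jump of $h$ rather than imposed as an independent unknown. One has to verify that these five-looking conditions actually collapse into exactly four constraints on the four integration constants (the mean-zero constraint being automatic once $\dot\varphi(\pm T)=0$ and the derivative continuity at $t$ are imposed). Once this is in place the hyperbolic identities collapse to the announced expression for $K_\alpha(t)$, and the Herbst--Chernoff conclusion is immediate.
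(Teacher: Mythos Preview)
Your proof is correct and follows essentially the same route as the paper: write $f(\omega_t)$ as a simple functional with $h^i=\mathbf{1}_{[-T,t]}\mathbf{e}_i$, apply the path-space LSI of Theorem~\ref{thm:logsob}, compute $\langle\mathbf{1}_{[-T,t]},\mathbf{1}_{[-T,t]}\rangle_\alpha$ by solving \eqref{eq:ODEforh}, and conclude via Herbst. The paper simply writes down the explicit solution $\varphi^i_s=\bigl[\tfrac{\sinh(\alpha(T-t))}{\sinh(2\alpha T)}\cosh(\alpha(T+s))-\mathbf{1}_{[t,T]}(s)\cosh(\alpha(s-t))\bigr]\mathbf{e}_i$ and says ``it can be verified with a direct computation'', whereas you derive it from the Neumann conditions $\dot\varphi(\pm T)=0$, the jump $\varphi(t^-)-\varphi(t^+)=1$, and the continuity of $\dot\varphi$ at $t$; these two formulas agree, and your piecewise $A_1,A_2$ coincide with the paper's expression on each subinterval. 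The paper then cites Ledoux for the LSI\,$\Rightarrow$\,concentration step where you run Herbst explicitly, so the only real difference is the level of detail.
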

The time evolution of the concentration parameter $\xi_{\alpha}(t)$ reflects the fact that bridges fluctuate the most around $t=0$, as the influence of pinning is at its minimum whereas close to the endpoints it becomes so strong that fluctuations die out.

\subsection{Reciprocal class}
the natural collocation of some of our results is within the framework of reciprocal processes. The reciprocal class associated with $U$, which we call $\mathfrak{R}_T(U)$ is the set of all bridge mixtures.
For a given probability measure $\mu$ on $\RD \times \RD$ we  define $P^{\mu}_{-T,T}$ via
\be\label{eq:PMU}
P^{\mu}_{-T,T}(A) := \int_{\RD \times \RD} P^{x,y}_{-T,T}(A \cap \{X_{-T}=x,X_{T}=y\}) \mu(dx dy)
\ee
Note that when $\mu = \delta_x \otimes \delta_y$ we obtain the $xy$ bridge. 
The reciprocal class is then:
\bes
\mathfrak{R}_T (U):= \left\{Q \in \cP(\Omega_{-T,T}): Q= \PMU \ \text{for some} \ \mu \in \cP(\RD \times \RD) \right\}.
\ees
where $\cP(\cdot)$ stands for the space of probability measures and $\Omega_{-T,T} = \bigcup_{x,y \in \RD} \Omega^{x,y}_{-T,T}$. Even though $\mathfrak{R}_T (U)$ is mostly made of non-Markov processes, it still contains infinitely many Markovian elements. For example, it contains all Langevin dynamics relative to $U$, and all of their bridges. But in general it contains more: precisely another Langevin dynamics relative to a different potential $\tilde{U} $ belongs to $\frak{R}_T(U)$ if and only if $\nabla \scrU = \nabla \tilde{\scrU}$. This is due to Krener \cite{Kre88}. All elements of a reciprocal process enjoy a weaker property than the Markov property: they are time Markov fields, see \cite{LRZ}.
\begin{myxmpl}
Let $U(z)= \frac{\alpha}{2}|z|^2$ for $\alpha>0$. Then $\mathfrak{R}_T(U)$ contains the Ornstein Uhlenbeck process with mean reversion, i.e. with drift $- \alpha z$ but also the Orntein Uhlenbeck process with mean repulsion, i.e. with drift $\alpha z$. Furthermore, it contains interesting strictly non Markovian processes. It is showed in \cite{RT02} that the periodic Ornstein Uhlenbeck process, i.e. the solution of
\bes
dX_{t} = -\alpha X_t + dB_t, \quad X_{-T} = X_{T}
\ees
belongs to $\mathfrak{R}_T(U)$.
\end{myxmpl}

\subsection{Comparison principle}
We fix two potentials $U^1$ and $U^2$ and denote the elements of the corresponding reciprocal classes $P^{i,\mu}_{-T,T}, i=1,2$. In the same way we define $P^{i,x,y}_{-T,T}, i=1,2$ . $\scrU^i$ is defined as in \eqref{def:reciprocalpot}. The following comparison principle is obtained by combining the representation formula \eqref{eq:driftreprs} with the standard comparison principles for SDEs.
\begin{theorem}\label{thm:comparisonprinciple}
Let $d=1$ and $x,y$ be fixed. If $U^i, i=1,2$ are such that 
\bes
\forall t \in [-T,T], \, z \in \R, \quad (\scrU^1 - \scrU^2)'(t,z) \geq 0 .
\ees 	
Then for any  $ \mu \in \cP(\R \times \R)$ there exists  a probability space $(\tilde{\Omega},\tilde{\cF},\tilde{P})$ and maps $X^{i}, i=1,2$ defined on it with the property that $X^{i} \# \tilde{P}= P^{i,\mu}_{-T,T}$ and
\bes
P\left(X^{2}_t \geq X^{1}_t \ \forall t \in [-T,T]\right)=1
\ees
\end{theorem}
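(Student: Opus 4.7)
The plan is to first reduce to the case of Dirac endpoints. If for every $(x,y)$ we can construct a coupling $\tilde P^{x,y}$ of $P^{1,x,y}_{-T,T}$ and $P^{2,x,y}_{-T,T}$ under which $X^2 \geq X^1$ pathwise, then $\tilde P := \int \tilde P^{x,y}\,\mu(dx\,dy)$ yields the desired coupling for the mixtures, with common endpoints $(X^1_{-T},X^1_T)=(X^2_{-T},X^2_T)\sim\mu$ so that the endpoint inequalities are automatic.

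For fixed endpoints, Lemma \ref{genhtrans} represents each bridge as the unique strong solution of
\[ dX^i_t = \left( \frac{y - X^i_t}{T-t} + g^i(t, X^i_t) \right) dt + dB_t, \qquad X^i_{-T}=x, \]
with $g^i(t,z) := \partial_z \log \psi^i(t,z)$. Driving both SDEs by the same Brownian motion $B$ and applying the one-dimensional comparison theorem for SDEs (Ikeda--Watanabe), the task reduces to the pointwise inequality $g^2(t,z) \geq g^1(t,z)$ on $[-T,T)\times\R$, since the Brownian-bridge contribution $(y-z)/(T-t)$ is shared and each $g^i$ is smooth (hence locally Lipschitz) in $z$ on $[-T,T)\times\R$.

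To establish this inequality I would exploit that $\psi^i$ solves, by Feynman--Kac,
\[ \partial_t\psi^i + \frac{y-z}{T-t}\partial_z\psi^i + \frac{1}{2}\partial_z^2\psi^i = \scrU^i\psi^i,\qquad \psi^i(T,\cdot)\equiv 1, \]
so that $\phi^i := \log\psi^i$ satisfies the HJB equation $\partial_t\phi^i + L^W\phi^i + \frac{1}{2}(\partial_z\phi^i)^2 = \scrU^i$ with $L^W := \frac{y-z}{T-t}\partial_z + \frac{1}{2}\partial_z^2$. Differentiating in $z$ and setting $g^i=\partial_z\phi^i$ yields $\partial_t g^i + L^W g^i - \frac{1}{T-t}g^i + g^i\,\partial_z g^i = (\scrU^i)'$ with $g^i(T,\cdot)\equiv 0$. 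Subtracting the two equations and using the identity $g^2\partial_z g^2 - g^1\partial_z g^1 = g^1\partial_z\Delta + \Delta\,\partial_z g^2$, the difference $\Delta := g^2 - g^1$ satisfies the \emph{linear} backward parabolic equation
\[ \partial_t \Delta + \bigl(L^W + g^1\partial_z\bigr)\Delta + \Bigl(\partial_z g^2 - \tfrac{1}{T-t}\Bigr)\Delta = (\scrU^2 - \scrU^1)' \leq 0, \qquad \Delta(T,\cdot)\equiv 0. \]
A Feynman--Kac representation (equivalently, a parabolic maximum principle) then forces $\Delta \geq 0$, closing the argument.

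The main obstacle I anticipate is the singular behaviour of the coefficients $(y-z)/(T-t)$ and $1/(T-t)$ as $t\uparrow T$: the equation for $\Delta$ is not uniformly parabolic up to the terminal time. I would handle this by first running the argument on subintervals $[-T,T-\varepsilon]$, on which all coefficients are smooth and bounded on compacts, and then passing to the limit $\varepsilon\downarrow 0$ using the continuity of $g^i$ up to $T$ inherited from the smoothness of $U$. One can also notice that the singular zero-order coefficient $-1/(T-t)$ integrates along the auxiliary diffusion to the finite factor $(T-s)/(T-t)$ in the Feynman--Kac kernel, so the representation of $\Delta$ makes sense globally. The same truncation supplies the strong solutions of the bridge SDEs used in the comparison step.
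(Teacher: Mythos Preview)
Your reduction to fixed endpoints and the plan to compare the two bridge drifts and then invoke the one--dimensional SDE comparison principle is exactly what the paper does. The difference lies in how the drift inequality is obtained.

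The paper uses Lemma~\ref{genhtrans2} rather than Lemma~\ref{genhtrans}: instead of expressing each bridge drift as a correction of the Brownian bridge drift and then subtracting, it writes the drift of $P^{2,x,y}_{-T,T}$ directly as the drift of $P^{1,x,y}_{-T,T}$ plus the single correction
\[
\partial_z \log E_{P^{1,z,y}_{t,T}}\Big(\exp\Big(\int_t^T(\scrU^1-\scrU^2)(s,\omega_s)\,ds\Big)\Big).
\]
The required inequality is then simply that this derivative is nonnegative, i.e.\ that the expectation is nondecreasing in $z$. This is proved by a purely probabilistic monotonicity argument: the one--dimensional comparison principle gives a monotone coupling $X^z\preceq X^{\tilde z}$ of $P^{1,z,y}_{t,T}$ and $P^{1,\tilde z,y}_{t,T}$ for $z\le\tilde z$, and the functional $\omega\mapsto\exp(\int_t^T(\scrU^1-\scrU^2)(s,\omega_s)\,ds)$ is pathwise increasing by the hypothesis $(\scrU^1-\scrU^2)'\ge 0$.

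Your PDE route is also sound: the linear backward equation for $\Delta=g^2-g^1$ has nonpositive source and zero terminal data, and the drift $L^W+g^1\partial_z$ is precisely the generator of $P^{1,\cdot,y}_{t,T}$, so your Feynman--Kac representation is in fact an expectation under the $P^1$-bridge, mirroring the paper's formula. What the paper's argument buys is that it sidesteps the singular coefficients at $t=T$ entirely (no truncation, no bounds on $\partial_z g^2$), because the monotone coupling holds pathwise on the whole interval. Your approach works but requires the extra care with the terminal singularity that you anticipated; the paper's is shorter and avoids those technicalities.
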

In particular we have the following corollary concerning bridges.
\begin{cor}\label{cor:comparison}
For any  $x,y \in \R$ there exists  a probability space $(\tilde{\Omega},\tilde{\cF},\tilde{P})$ and maps $X^{i}, i=1,2$ defined on it with the property that $X^{i} \# \tilde{P}= P^{i,x,y}_{-T,T}$ and
\bes
P\left(X^{2}_t \geq X^{1}_t \ \forall t \in [-T,T]\right)=1
\ees
\end{cor}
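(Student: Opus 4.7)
The plan is to obtain Corollary \ref{cor:comparison} as a direct specialization of Theorem \ref{thm:comparisonprinciple} to the case of deterministic endpoints; the monotonicity hypothesis $(\scrU^1 - \scrU^2)'(t,z) \geq 0$ on the reciprocal characteristics is inherited from the ambient setup of the theorem, so no new assumption needs to be verified.

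The key observation I will use is that Dirac data at the endpoints reduce the mixture measure $P^{i,\mu}_{-T,T}$ of \eqref{eq:PMU} to a single bridge. Concretely, taking $\mu = \delta_x \otimes \delta_y$ gives
\[
P^{i,\delta_x\otimes\delta_y}_{-T,T}(A) = \int_{\R\times\R} P^{i,x',y'}_{-T,T}\bigl(A \cap \{X_{-T}=x',\, X_T=y'\}\bigr)\,(\delta_x\otimes\delta_y)(dx'\,dy') = P^{i,x,y}_{-T,T}(A),
\]
since the bridge $P^{i,x,y}_{-T,T}$ is already concentrated on $\{X_{-T}=x,\,X_T=y\}$, so intersecting with that event loses no mass.

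With this identification, I will invoke Theorem \ref{thm:comparisonprinciple} with $\mu = \delta_x \otimes \delta_y$ to produce the probability space $(\tilde{\Omega},\tilde{\cF},\tilde{P})$ and the maps $X^1, X^2$ satisfying $X^i \# \tilde{P} = P^{i,\mu}_{-T,T}$ together with the pathwise comparison $\tilde P(X^2_t \geq X^1_t \ \forall t \in [-T,T]) = 1$. The pushforward identity then reduces to $X^i \# \tilde P = P^{i,x,y}_{-T,T}$ and the comparison is preserved verbatim, which is exactly the statement to be proved. There is essentially no obstacle in this last step: all the substantive work has already been carried out in Theorem \ref{thm:comparisonprinciple}, whose argument combines the Brownian-bridge drift representation \eqref{eq:driftreprs}, a Pr\'ekopa--Leindler argument transferring the monotonicity of $\scrU^1 - \scrU^2$ into a sign condition on the bridge drifts, and the classical one-dimensional SDE comparison theorem applied under a synchronous coupling of the driving Brownian motion.
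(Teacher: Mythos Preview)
Your reduction is formally correct --- specializing Theorem \ref{thm:comparisonprinciple} to $\mu=\delta_x\otimes\delta_y$ does yield the Corollary --- but in the paper's logical structure this is circular. The paper proceeds in the \emph{opposite} direction: it proves Corollary \ref{cor:comparison} directly and then obtains Theorem \ref{thm:comparisonprinciple} from it by mixing over the endpoint law $\mu$. So invoking the Theorem to prove the Corollary appeals to a result whose own proof presupposes the Corollary.

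The direct proof of the Corollary in the paper uses the drift decomposition of Lemma \ref{genhtrans2} (not merely \eqref{eq:driftreprs}) to write the drift of $P^{2,x,y}_{-T,T}$ as that of $P^{1,x,y}_{-T,T}$ plus $\partial_z\log E_{P^{1,z,y}_{t,T}}\!\big(\exp\int_t^T(\scrU^1-\scrU^2)\,ds\big)$, and then shows this extra term is nonnegative by a monotone-coupling argument: for $\tilde z\ge z$ one couples $P^{1,z,y}_{t,T}$ and $P^{1,\tilde z,y}_{t,T}$ via the one-dimensional SDE comparison theorem so that the paths are ordered, and uses that $\omega\mapsto\exp\int(\scrU^1-\scrU^2)$ is increasing. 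The standard SDE comparison principle then finishes. Note in particular that no Pr\'ekopa--Leindler argument is used here; that tool appears in the proof of Lemma \ref{lem:conc} for the convexity estimate, not in the comparison result.
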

\subsection{Bounding the distance between bridges and reciprocal processes}
Here, taking advantage of Stein's method techniques, we tackle the question of estimating the  $W_1$ distance between  of $\LBR$ and the Brownian bridge $\LBBR$ and, more generally, of bounding the distance between $\PMU$ and $W^{\mu}_{-T,T}$ for an arbitrary $\mu$.  We equip $\OMB$ with the sup-norm $\|\cdot \|_{\infty}$ and say that a functional $f: \Omega \rightarrow \R$ is $1$-Lipschitz if and only if
$ |f(\omega')-f(\omega) | \leq \| \omega - \omega' \|_{\infty} $. The $W_1$ distance is:
\bes
W_1(P,Q) = \sup_{f \, \text{1-Lipschitz} } |E_P(f)-E_Q(f)|
\ees
The fact that bridges may be viewed as invariant measures of SPDEs (see \cite{barbour1990stein}, \cite{reznikoff2005invariant},\cite{hairer2005analysis} and \cite{hairer2007analysis}), offers a natural candidate for the Stein equation, which we use to obtain the following result.
\begin{theorem}\label{thm:Stein}
Let $U$  be such that 
\bes
\sup_{t \in [-T,T],z \in \RD}|\nabla \scrU(s,z)|:=\|\nabla\scrU \|_{\infty} < + \infty 
 \ees
Then we have:
\be\label{eq:steinreciprocalbound}
W_1(P^{\mu}_{-T,T},W^{\mu}_{-T,T})\leq C T^2 \|\nabla \scrU \|_{\infty} 
\ee
where
  $$ C= E_{W^{0,0}_{-1,1}} \Big(\| \omega_s \|_{\infty} \int_{-1}^{1} |\omega_s| ds \Big)<+\infty  .$$
In particular:
\be\label{eq:steinbridgebound}
W_1(P^{x,y}_{-T,T},W^{x,y}_{-T,T}) \leq C T^2\|\nabla \scrU \|_{\infty}  
\ee
\end{theorem}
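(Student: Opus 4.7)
My approach is the Stein's-method strategy flagged in the introduction, which exploits the fact that both $W^{\mu}_{-T,T}$ and $P^{\mu}_{-T,T}$ arise as invariant measures of stochastic PDEs on the path space $\Omega_{-T,T}$ (see \cite{hairer2007analysis}). Let $\mathscr{A}^W$ and $\mathscr{A}^P$ denote the generators of these two SPDEs, acting on smooth cylindrical functionals $\phi\colon \Omega_{-T,T}\to\R$. Because the density of $P^{x,y}_{-T,T}$ with respect to $W^{x,y}_{-T,T}$ is $Z^{-1}\exp\bigl(-\int_{-T}^{T}\scrU(t,\omega_t)\,dt\bigr)$, the two SPDEs differ only in the drift, by a term built from $\nabla\scrU$. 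Concretely, I expect the identity
\[
(\mathscr{A}^P - \mathscr{A}^W)\phi(\omega) \;=\; -\int_{-T}^{T}\nabla\scrU(t,\omega_t)\cdot (D\phi)_t(\omega)\,dt,
\]
where $D$ is the Malliavin-type gradient naturally attached to these SPDEs (essentially the one used in the definition \eqref{def:Mallder}).

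The Stein equation can then be solved in the usual way via the SPDE semigroup. Denote by $\mathcal{S}^W_\tau$ the semigroup generated by $\mathscr{A}^W$. For a $1$-Lipschitz functional $f$ on $(\Omega_{-T,T},\|\cdot\|_\infty)$ I would set
\[
g(\omega) \;=\; -\int_{0}^{\infty}\Bigl(\mathcal{S}^W_\tau f(\omega) - E_{W^{\mu}_{-T,T}}(f)\Bigr)\,d\tau.
\]
The Brownian-bridge SPDE is linear (an infinite-dimensional Ornstein--Uhlenbeck process with Dirichlet boundary conditions sampled from the marginal of $\mu$), so $\mathcal{S}^W_\tau$ is a Gaussian semigroup converging exponentially fast to $W^{\mu}_{-T,T}$; this makes the $\tau$-integral converge and gives $\mathscr{A}^W g = f - E_{W^{\mu}_{-T,T}}(f)$. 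Combining this identity with the invariance $E_{P^{\mu}_{-T,T}}(\mathscr{A}^P g) = 0$ produces the Stein identity
\[
E_{P^{\mu}_{-T,T}}(f) - E_{W^{\mu}_{-T,T}}(f) \;=\; E_{P^{\mu}_{-T,T}}\Bigl(\int_{-T}^{T}\nabla\scrU(t,\omega_t)\cdot (Dg)_t(\omega)\,dt\Bigr).
\]

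The remaining task is the sharp pointwise bound on $Dg$. Because $\mathcal{S}^W_\tau$ is a linear Gaussian semigroup, $D\mathcal{S}^W_\tau f$ can be computed explicitly in terms of the Brownian-bridge covariance kernel, and the $1$-Lipschitz property of $f$ (in the sup-norm) yields an estimate of the form $|(Dg)_t(\omega)| \leq \|\omega\|_\infty\,\Phi(t,T)$ with $\Phi$ explicit. Inserting this bound and using $|\nabla\scrU|\leq\|\nabla\scrU\|_\infty$, one finds
\[
\bigl|E_{P^{\mu}_{-T,T}}(f) - E_{W^{\mu}_{-T,T}}(f)\bigr| \;\leq\; \|\nabla\scrU\|_\infty\,E_{P^{\mu}_{-T,T}}\Bigl(\|\omega\|_\infty\int_{-T}^{T}|\omega_t|\,dt\,\tilde\Phi(T)\Bigr),
\]
and a Brownian rescaling $(t,\omega)\mapsto(t/T,\omega/\sqrt T)$ reduces the remaining expectation to one under $W^{0,0}_{-1,1}$, producing the prefactor $T^2$ and the explicit constant $C$ of the statement. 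Taking the supremum over $1$-Lipschitz $f$ gives \eqref{eq:steinreciprocalbound}, and the specialization $\mu=\delta_x\otimes\delta_y$ yields \eqref{eq:steinbridgebound}.

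The hardest step is the Malliavin bound on $Dg$: one must respect the Dirichlet boundary condition of the SPDE (forcing $(Dg)_{\pm T}(\omega)=0$), differentiate under the $\tau$-integral with a uniform-in-$\omega$ dominating function, and then reduce the $P^{\mu}_{-T,T}$-expectation to a $W^{0,0}_{-1,1}$-expectation so that the constant $C$ comes out in the form stated. The last reduction should be possible by a comparison argument between $P^{\mu}_{-T,T}$ and $W^{\mu}_{-T,T}$ at the level of sup-norm moments, since the weighting $\exp(-\int\scrU\,dt)$ is harmless after one has already paid the price of an explicit $\|\nabla\scrU\|_\infty$ in front.
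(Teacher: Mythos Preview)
Your overall strategy---Stein's method via the SPDE/Ornstein--Uhlenbeck semigroup, solving the Stein equation by $g=-\int_0^\infty S_u(f-E_{W^\mu}f)\,du$---is exactly the paper's. The genuine gap is in how you represent $\mathscr A^P-\mathscr A^W$ and, as a consequence, under which measure the sup-norm moment lives.

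You write the generator difference as $-\int_{-T}^T\nabla\scrU(t,\omega_t)\cdot(D\phi)_t(\omega)\,dt$ with $(D\phi)_t$ a Malliavin-type gradient, and then aim for a pointwise bound $|(Dg)_t(\omega)|\le\|\omega\|_\infty\Phi(t,T)$. This forces the final estimate to carry $E_{P^\mu_{-T,T}}\bigl(\|\omega\|_\infty\int_{-T}^T|\omega_t|\,dt\bigr)$, which you yourself identify as the hard step. That step cannot work as stated: the quantity depends on $\mu$ (through the endpoints $x,y$), so no comparison argument will reduce it to the universal $W^{0,0}_{-1,1}$-expectation uniformly in $\mu$, and the constant $C$ in the theorem is explicitly $\mu$-free. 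Moreover, the $1$-Lipschitz property of $g$ in $\|\cdot\|_\infty$ only says that $Dg(\omega)$ has total variation at most $1$ as a measure in $t$; it gives no pointwise control on a density $(Dg)_t(\omega)$, and certainly not one proportional to $\|\omega\|_\infty$.

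The paper avoids all of this by writing the semigroup in Mehler form,
\[
S_uf(\omega)=\int_{\Omega^{0,0}_{-T,T}} f\bigl(e^{-u}\omega+\sigma(u)\tilde\omega+(1-e^{-u})\varphi\bigr)\,W^{0,0}_{-T,T}(d\tilde\omega),
\]
so that both the generator and the difference $\scrA_\scrU-\scrA$ are expressed through the \emph{Fr\'echet} directional derivative and an integral over an auxiliary bridge $\tilde\omega$:
\[
\scrA g(\omega)-\scrA_\scrU g(\omega)=\int_{\Omega^{0,0}_{-T,T}}Dg(\omega)[\tilde\omega]\int_{-T}^{T}\nabla\scrU(s,\omega_s)\cdot\tilde\omega_s\,ds\;W^{0,0}_{-T,T}(d\tilde\omega).
\]
Because $g$ is $1$-Lipschitz in $\|\cdot\|_\infty$, one has $|Dg(\omega)[\tilde\omega]|\le\|\tilde\omega\|_\infty$ \emph{uniformly in $\omega$}, and $|\nabla\scrU(s,\omega_s)\cdot\tilde\omega_s|\le\|\nabla\scrU\|_\infty|\tilde\omega_s|$ removes the remaining $\omega$-dependence. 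Taking $E_{\LBR}$ over $\omega$ is then trivial, leaving $\|\nabla\scrU\|_\infty\,E_{W^{0,0}_{-T,T}}\bigl(\|\tilde\omega\|_\infty\int_{-T}^T|\tilde\omega_s|\,ds\bigr)$, which Brownian scaling turns into $CT^2\|\nabla\scrU\|_\infty$. No moment of $\omega$ under $P^\mu_{-T,T}$ is ever needed; the passage from bridges to general $\mu$ is a one-line integration over the endpoints.
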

Let us note that similar bounds can be obtained by working with different norms instead of $\|\cdot\|_{\infty}$, with minor changes from the proof of Theorem \ref{thm:Stein}. This result can be viewed from a broader perspective as one of the instances of a more general method to compare conditional probabilities, which is developed in the forthcoming article \cite{chiarini2016approximating}. 
\subsection{Invariant measure and contraction estimates.}
Here we propose a natural notion for invariant measure associated with the family $\{\mathfrak{R}_T(U)\}_{T >0}$.
\begin{mydefn}\label{def:invmeas}
 $m$ is an invariant measure if for any $\mu \in \cP(\RD \times \RD)$, $ \omega_0 \#\PMU$ converges weakly to $ m$ as $T \rightarrow + \infty$.
\end{mydefn}
Using a kind of Girsanov Theorem for bridges proved in \cite[Sec. V]{LeKre93}, it is possible to view $\LBR$ as a penalization (in the sense of \cite[Sec 1.4]{roynette2009penalising}) by an additive functional of the Brownian bridge. From this angle, the notion of invariant measure we have just introduced is included in that of Gibbs measure on path space, see \cite[Chap. 4]{lorinczi2011feynman} and references therein. 
In the next theorem we establish existence when $\scrU$ is convex and we address the problem of studying how fast $\omega_0 \# P^{\mu}_{-T,T}$ approaches $m$. We denote by $W_p(\cdot,\cdot)$ the p-Wasserstein distance on $\cP(\RD)$ and by $\tilde{W}_p(\cdot,\cdot)$ the p-Wasserstein distance on $\cP(\RD \times \RD)$ (see \eqref{def:pwasser} for precise definitions).
\begin{theorem}\label{theorem:invmeas}
 Let $U$ be time homogeneous and $\mathscr{U}$ satisfy the convexity assumption \eqref{eq:convass}. Then:
 \begin{enumerate}[(i)]
 \item There exist an invariant measure $m$ such that:
 \be\label{eq:invconvest}
 \frac{d m}{d \lambda }(z) = \exp(-2V(z)), \quad \inf_{v,z \in \RD : |v|=1}\nabla^2 V(z)[v,v] \geq \alpha .
 \ee
 where $\lambda$ is the Lebesgue measure on $\RD$.
 \item
For any $T>0$, and any $\mu,\nu \in \cP(\RD \times \RD)$:
\be\label{eq:contrest2}
W_p(\omega_0 \# \PMU,\omega_0 \# P^{\nu}_{-T,T}) \leq \frac{1}{\sqrt{2}\cosh(\alpha T)} \tilde{W}_p(\mu,\nu)  
\ee	
In particular:
\be\label{eq:contrest}
W_p(\omega_0 \# \PMU,m) \leq \frac{1}{\sqrt{2} \cosh(\alpha T)} \tilde{W}_p(\mu,(\omega_{-T},\omega_{T})\# \hat{P}^m)  
\ee
where $\hat{P}^m$ is the stationary Markov process on $C(\R,\RD)$ whose generator is $\scrL$ (see \eqref{eq:LDgen}).
\end{enumerate}
 \end{theorem}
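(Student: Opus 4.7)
The strategy is to prove the contraction estimate \eqref{eq:contrest2} first from the coupling in Theorem \ref{thm:grest}(iii), then construct $m$ as the Gibbs measure of an auxiliary convex potential $V$ sharing its reciprocal characteristic with $\scrU$, and finally to derive \eqref{eq:contrest} as a special case of \eqref{eq:contrest2}. For \eqref{eq:contrest2}, fix an optimal coupling $\pi$ realising $\tilde{W}_p(\mu,\nu)$ on $(\RD\times\RD)^2$; for each sample $((x_1,y_1),(x_2,y_2))$ invoke Theorem \ref{thm:grest}(iii) to obtain a coupling of the corresponding bridges. Evaluating the pathwise bound at $t=0$ and using $\sinh(\alpha T)/\sinh(2\alpha T)=1/(2\cosh(\alpha T))$ together with $|a|+|b|\leq\sqrt{2}\sqrt{|a|^2+|b|^2}$, one obtains almost surely
\[
|X^{1}_0-X^{2}_0|\;\leq\;\frac{1}{\sqrt{2}\cosh(\alpha T)}\sqrt{|x_1-x_2|^2+|y_1-y_2|^2}.
\]
Raising to the $p$-th power and integrating against $\pi$ yields \eqref{eq:contrest2}.

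To construct $m$, I would seek a time-homogeneous potential $V$ whose Langevin dynamics $Q^x_{-T}$ shares its bridges with $P^x_{-T}$. By Krener's characterisation this reduces to $\nabla \scrV = \nabla \scrU$, i.e.\ $\scrV = \scrU - \lambda$ for some constant $\lambda$. Setting $V:=-\log\psi$, where $\psi>0$ is a ground state of the Schr\"odinger operator $-\tfrac12\Delta+\scrU$ with eigenvalue $\lambda$, a direct computation gives $\tfrac12|\nabla V|^2-\tfrac12\Delta V=\scrU-\lambda$. The convexity hypothesis \eqref{eq:convass} then propagates to log-concavity of $\psi$ via the Pr\'ekopa--Leindler inequality (the same tool used earlier in the paper to control the second derivatives of $U^y_T$), yielding the bound $\nabla^2 V\geq \alpha I$ required in \eqref{eq:invconvest}; then $m:=Z^{-1}\exp(-2V)\,\lambda(dz)$ is the unique reversible invariant measure of the Langevin dynamics $Q$ generated by $V$.

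For \eqref{eq:contrest} and the invariant-measure property, apply \eqref{eq:contrest2} with $\nu:=(\omega_{-T},\omega_T)\#\hat{P}^m$. Since $\hat{P}^m$ is the $m$-stationary process with generator $\scrL$ and $P^{x,y}_{-T,T}=Q^{x,y}_{-T,T}$, the mixture $P^{\nu}_{-T,T}$ coincides with the restriction of $\hat{P}^m$ to $[-T,T]$; in particular $\omega_0\#P^{\nu}_{-T,T}=m$ by stationarity, giving \eqref{eq:contrest}. Since $m$ has finite moments of every order (by $\alpha$-convexity of $V$), its right-hand side vanishes as $T\to\infty$ for any $\mu$ with finite $p$-th moment, yielding $W_p$-convergence (hence weak convergence) of $\omega_0\#P^{\mu}_{-T,T}$ to $m$, as required by Definition \ref{def:invmeas}.

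The main obstacle is the quantitative log-concavity estimate $\nabla^2 V\geq \alpha I$: one must show that the $\alpha^2/2$-convexity of the Schr\"odinger potential $\scrU$ transfers with the sharp linear rate $\alpha$ to the negative log of the ground state. This is the step where a careful Pr\'ekopa--Leindler (equivalently Brascamp--Lieb) argument is needed, propagating convexity through the Feynman--Kac semigroup and passing to the long-time limit; all other steps are coupling manipulations together with routine verification that the constructed $Q$ has $m$ as its reversible invariant measure.
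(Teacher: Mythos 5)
Your proposal follows the same overall strategy as the paper's proof: construct $V:=-\log\psi$ from the ground state $\psi$ of the Schr\"odinger operator $-\tfrac12\Delta+\scrU$ so that $\scrV=\scrU-k$ and hence the bridges coincide, and prove \eqref{eq:contrest2} by evaluating the pathwise coupling of Theorem \ref{thm:grest}(iii) at $t=0$, using $\sinh(\alpha T)/\sinh(2\alpha T)=1/(2\cosh(\alpha T))$ together with $|a|+|b|\leq\sqrt2\,\sqrt{|a|^2+|b|^2}$ before integrating against an optimal coupling — this is exactly the paper's computation, and so is the identification $\omega_0\#P^\nu_{-T,T}=m$ for $\nu=(\omega_{-T},\omega_T)\#\hat{P}^m$ used to obtain \eqref{eq:contrest}. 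Two places where the detail differs: first, for the sharp bound $\nabla^2V\geq\alpha$ the paper invokes Brascamp--Lieb (Theorem 6.1 of that reference), which gives $\psi(z)=\exp(-\tfrac\alpha2|z|^2)\phi(z)$ with $\phi$ log-concave; you correctly identify this as the crux and name the right tool but leave the step unresolved, and a bare Pr\'ekopa--Leindler argument does not by itself give the quantitative rate $\alpha$, so this gap must be filled. Second, to verify that $m$ is invariant in the sense of Definition \ref{def:invmeas}, the paper writes the bridge marginal density explicitly as $q_T(x,z)q_T(z,y)/q_{2T}(x,y)$ and lets $T\to\infty$ via ergodicity of the $V$-Langevin dynamics, whereas you deduce weak convergence from \eqref{eq:contrest} together with moment bounds on $m$; both routes are sound once one reduces to $\mu=\delta_x\otimes\delta_y$, and yours has the mild extra virtue of being a direct consequence of the contraction estimate you have already proved.
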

As $T \uparrow + \infty$, $\tilde{W}_p(\mu,(\omega_{-T},\omega_{T})\#\hat{P}^m)$  converges to $\tilde{W}_p(\mu,m \otimes m)$, so the bound \eqref{eq:contrest} really implies an exponential decay  of $W_p(\omega_0 \# \PMU,m) $. Moreover,  $\tilde{W}_p((\omega_{-T},\omega_{T})\# \hat{P}^m, m \otimes m)$ can also be quantitatively estimated using the log-concavity of $m$, which is granted by \eqref{eq:invconvest}.
It is possible to derive the following
\begin{cor}
In the hypothesis of the Theorem, we have:
\[\tilde{W^p}\Big( (\omega_{-T},\omega_{T})\#\hat{P}^m,  m \otimes m \Big) \leq \exp(-2 \alpha T) \int_{\R^d}\Big(\int_{\R^d} (x-y)^p m(dy)\Big)^{1/p} m(dx) \]   
\end{cor}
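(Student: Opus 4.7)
The plan is to couple the two joint laws so that their first coordinates are matched exactly and then to apply the Bakry-\'Emery $W_p$-contraction of the Langevin semigroup generated by $\scrL$ fibrewise in the first coordinate. By Theorem~\ref{theorem:invmeas}(i), $m=e^{-2V}d\lambda$ with $\nabla^2 V\geq \alpha I$, so the semigroup $(P_t)_{t\geq 0}$ associated to $\scrL$ satisfies $W_p(\mu P_t,\nu P_t)\leq e^{-\alpha t}W_p(\mu,\nu)$; this is the $W_p$-analogue of the gradient estimate \eqref{eq:grestbak} and follows from the usual synchronous coupling of two copies of the SDE driven by the same Brownian motion, using $\nabla^2 V\geq \alpha I$.

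First I would disintegrate the two measures along their common first marginal. Under $\hat P^m$ one has $(\omega_{-T},\omega_T)\#\hat P^m=m(dx)P_{2T}(x,dy)$, while the invariance $mP_{2T}=m$ lets one write $m\otimes m=m(dx)(mP_{2T})(dy)$. Pasting the identity on the first coordinate with an optimal $W_p$-coupling $\Gamma_x(dy,dy')$ between $P_{2T}(x,\cdot)$ and $m$, measurable in $x$ by a standard selection theorem, produces a transport plan $\pi$ between $(\omega_{-T},\omega_T)\#\hat P^m$ and $m\otimes m$ supported on $\{x=x'\}$, whose $\tilde W_p$-cost is read directly from the fibrewise $W_p$-distance.

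Next I would bound each fibre using the contraction: $W_p(P_{2T}(x,\cdot),m)=W_p(\delta_x P_{2T},mP_{2T})\leq e^{-2\alpha T}W_p(\delta_x,m)=e^{-2\alpha T}\bigl(\int_{\RD} |x-y|^p m(dy)\bigr)^{1/p}$. Integrating against $m(dx)$ as dictated by the disintegration and pulling out the prefactor $e^{-2\alpha T}$ yields the right-hand side of the corollary.

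The main obstacle is verifying that the Bakry-\'Emery $W_p$-contraction really does apply to the effective potential $V$ produced in Theorem~\ref{theorem:invmeas}; this is immediate from $\nabla^2 V\geq\alpha I$ together with enough regularity on $V$ to run the synchronous coupling. The remaining bookkeeping — measurable selection of $\Gamma_x$ and the precise convention for the norm on $\RD\times\RD$ entering $\tilde W_p$ — is routine.
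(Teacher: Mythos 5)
Your strategy—match the first coordinates by the identity coupling, then apply a Bakry--\'Emery $W_p$-contraction fibrewise in the second coordinate—is the right one, and every intermediate step (the disintegration $(\omega_{-T},\omega_T)\#\hat P^m=m(dx)P_{2T}(x,dy)$, the invariance $mP_{2T}=m$, the synchronous-coupling contraction $W_p(\delta_x P_{2T},mP_{2T})\leq e^{-2\alpha T}W_p(\delta_x,m)$ coming from $\nabla^2 V\geq\alpha I$ established in Theorem~\ref{theorem:invmeas}(i)) is justified. The paper does not record a proof of this corollary, so there is nothing to compare your route against; it is the natural one.

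There is, however, a genuine gap at the very last step, where you assert that ``integrating against $m(dx)$\ldots\ yields the right-hand side.'' The quantity your coupling actually controls is
\[
\tilde W_p\Big((\omega_{-T},\omega_T)\#\hat P^m,\,m\otimes m\Big)^p \;\le\; \int_{\RD} W_p\big(P_{2T}(x,\cdot),m\big)^p\,m(dx)\;\le\; e^{-2\alpha Tp}\int_{\RD}\int_{\RD}|x-y|^p\,m(dy)\,m(dx),
\]
because $\tilde W_p$ is an $L^p$-type quantity: the outer $1/p$ power sits outside the $m(dx)$-integral, not inside. Taking the $p$-th root you get
\[
\tilde W_p\Big((\omega_{-T},\omega_T)\#\hat P^m,\,m\otimes m\Big)\;\le\;e^{-2\alpha T}\Big(\int_{\RD}\int_{\RD}|x-y|^p\,m(dy)\,m(dx)\Big)^{1/p},
\]
whereas the corollary as printed has the $1/p$ power applied to the inner integral only, i.e.\ $e^{-2\alpha T}\int_{\RD}\big(\int_{\RD}|x-y|^p\,m(dy)\big)^{1/p}m(dx)$. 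For $p>1$ Jensen's inequality ($t\mapsto t^{1/p}$ concave) shows this printed bound is \emph{strictly smaller} than what your argument delivers, so the two are not equivalent and your argument does not reach the stated constant. You should either flag this as an apparent misprint in the corollary (note also the missing absolute value in $(x-y)^p$) and prove the $\big(\iint\big)^{1/p}$ version, or explain how the stronger $\int(\int\cdot)^{1/p}$ form would be obtained—I do not believe the diagonal-coupling route can produce it.

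A smaller point: be explicit that $\tilde W_p$ on $\RD\times\RD$ is computed with the Euclidean norm on the product (this is what is used in the proof of Theorem~\ref{theorem:invmeas}(ii), where the factor $2^{p/2}$ appears), so that with $x=x'$ the cost reduces to $|y-y'|^p$; under the $\ell^1$ or $\ell^\infty$ product norm the same simplification holds, so this does not affect the argument, but it should be stated.
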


 \begin{remark}
  \begin{itemize}
 \item
The existence part of Theorem \ref{theorem:invmeas} can be proven under the weaker assumption that $\mathscr{U}$ is Kato decomposable using the results of \cite[Sec. 4.1,4.2]{lorinczi2011feynman}. To be self contained we prefer not to rely on them and give a direct proof under the convexity assumption.
\item In the language of \cite[Sec. 3.10.2]{lorinczi2011feynman} the process $\hat{P}^m$ is the $P(\phi)_1$ process for the potential $\scrU$.

\item As it is clear from the proof of the Theorem, if $U$ is itself convex, the invariant measure is the same as for the non-pinned dynamics.
\end{itemize}
\end{remark}
\begin{myxmpl}
Let $\alpha>0$. The above Theorem tells, unsurprisingly, that if $U=  \frac{\alpha}{2} |z|^2$, then $ \mathfrak{R}_{T}(U)$ admits an invariant measure, since $\scrU = \frac{\alpha}{2} (\alpha |z|^2-1)$. However, as we have seen in the previous example, the process with mean repulsion, whose associated potential is $-\frac{\alpha}{2} |z|^2$ lies also in $\mathfrak{R}_{T}(U)$. Therefore the bridges of the Ornstein Uhlenbeck process with mean repulsion admit an invariant measure, whereas the non conditioned dynamics clearly doesn't.
\end{myxmpl}
\subsection*{Some heuristics}
The reciprocal characteristic has been interpreted as a ``mean acceleration" term in the context of Stochastic mechanics , see e.g. \cite{Nel67}. Although this article does not aim at establishing connections with physics, such interpretation is useful to better understand our probabilistic results: if one considers the one dimensional ODE
\bes
\begin{cases} \dot{x}_t = -  U'(t,x_t) \\ x_{-T} = x \end{cases}
\ees
then the acceleration of $x_t$ is $\ddot{x}_t = U''(x_t)U'(x_t)-\partial_tU'(t,x_t)  $. When adding to the ODE a ``$dB_t$" term to get the Langevin dynamics, one should think that the Brownian motion produces a change in the \emph{mean acceleration} by $-\frac{1}{2} U'''$, so  that the mean acceleration of the Langevin dynamics is $\scrU'$. 
Such intuition has been developed by several authors, see for example \cite{Th93},\cite{Kre97}, and the forthcoming work \cite{conforti2017coupling} (where bridges on Riemannian manifolds are also considered).  
Adopting a statistical viewpoint yields another interesting interpretation. Indeed, Krener's observation can be rephrased by saying that there is no statistical algorithm that allows to infer the potential $U$  of a Langevin dynamics from the bridge sample paths. More precisely, one can extract from bridge-samples the reciprocal characteristic $\nabla \scrU$, and no more. This is just one manifestation of the fact that  conditional distributions carry less information than the full measure.\\
Theorem \ref{thm:comparisonprinciple} shows that if $U^1$ and $U^2$ are such that ${\scrU^1}' \geq {\scrU^2}'$, one can couple the $xy$ bridges of the corresponding Langevin dynamics in such a way that the bridge associated with $U^2$ lies always above the bridge associated with $U^1$ . This results establishes a nice parallelism with what is known for ODEs. Indeed, consider two deterministic particles $x^1_t$ and $x^2_t$ with constant acceleration $u_1$, $u_2$ with $ u_1 \geq u_2$and identical endpoints:
\bes
\begin{cases}
\ddot{x}^1_t=u_1 \\
x^1_{-T}=x , \quad x^{1}_T=y 
\end{cases}\quad
\begin{cases}
\ddot{x}^2_t=u_2 \\
x^2_{-T}=x , \quad x^{2}_T=y 
\end{cases}
\ees
Then it is easy to see, by solving the ODE explicitly that $x^1_t\leq x^2_t$ forall $-T \leq t \leq T$.  Recalling the interpretation of $\scrU'$ as a stochastic mean acceleration, our Theorem \ref{thm:comparisonprinciple} can be viewed as a stochastic version of the particular type of  comparison principle for ODEs we have just outlined in the lines above.\\
 On the same spirit, one obtains an interpretation of Theorem  \ref{thm:grest}. Consider two deterministic one dimensional particles $x^1_t,x^2_t$ subject to the same positional acceleration field of the form $ \scrU' (\cdot)$, for some function $\scrU$. The particles have the same final position $y$ but different starting points $x^1,x^2$ with $x^2 > x^1$.
That is,
\bes
\begin{cases}\ddot{x}^1_t =  \scrU' (x^1_t), \\ x^{1}_{-T}=x_1, \, x^{1}_T =y \end{cases}\quad \begin{cases}\ddot{x}^2_t =  \scrU' (x^2_t),\\  x^{2}_{-T}=x_2, \, x^{2}_T =y \end{cases}
\ees
Then we have $x^2_t \geq x^1_t$ at any time. Let us now study $|x^2_t - x^1_t| = x^2_t - x^1_t$. It is easy to see that if $\scrU$ is $\frac{\alpha^2}{2}$ convex, i.e. $\scrU'' \geq \alpha^2$ we have
 \[ \frac{d^2}{dt^2}(x^2_{t} - x^1_t) = \scrU'(x^2_t)-\scrU'(x^1_t) \geq  \alpha^2 (x^2_t-x^1_t). \] 
 Integrating this differential inequality for $x^2_t-x^1_t$ one obtains:
\bes
\forall -T \leq t \leq T \quad  |x^2_t- x^1_t| \leq |x^2 - x^1| \frac{\sinh(\alpha(T-t)) } {\sinh ( 2 \alpha T )}
\ees
 It is natural to wonder whether the same reasoning applied to the ``mean stochastic acceleration" produces the same results. Quite surprisingly it does, as point (iii) of Theorem \ref{thm:grest} shows. (Note that there also different final conditions  are taken into account). Again, we stress that the analogy with mechanics has the sole purpose of helping in understanding the  mathematical results, and we make no physical claim.\\

\subsection{Some remarks about reciprocal processes}As we have seen, some of the results take their most complete form when applied to the family of \textit{reciprocal processes} associated with a Langevin dynamics. They are constructed as arbitrary bridge-mixtures: therefore they are not Markov in general but rather time Markov-fields (see \cite[Sec. 2]{LRZ}). These processes were introduced by Bernstein \cite{Bern32}, who took the inspiration from Schr\"odinger's works \cite{Schr},\cite{Schr32} on the analogies between diffusion processes and quantum dynamics. The series of paper by Jamison \cite{Jam70},\cite{Jam74},\cite{Jam75} initiated a systematic investigation of their structural properties, and laid the foundations for their application in the context of Stochastic Mechanics, see for example \cite{LeKre93},\cite{Zam86},\cite{CruZa91},\cite{TZ97} and references therein. In this paper we add a new motivation for their study: we construct a natural class of minimization problems which generalizes the well known Schr\"odinger problem (see the recent survey \cite{LeoSch}) and whose solution is a reciprocal process.\\
This result generalizes the well known result due to F\"ollmer \cite{FOLL88} that the solution to the Schr\"odinger problem is one of the \textit{Markovian} elements of the reciprocal class.

\begin{prop}\label{label:normapprox}
Let $c:\R \rightarrow \R_{\geq 0}$ be strictly convex. We define the cost function $C(.,.)$ on $\cP(\Omega_{-T,T})$:
\bes
C(Q,P) = \begin{cases} E_{P}\Big( c\Big(\frac{dQ}{dP} \Big) \Big), \quad & \mbox{if $Q\ll P$} \\ 
+\infty, \quad & \mbox{otherwise}
\end{cases}
\ees
Consider the problem
 \be\label{approxprob}
 \min C(Q,P^{\mu}_{-T}), \quad Q \in \cP(\Omega_{-T,T}), \, {\omega_{-T}\#}Q = \mu, \, {\omega_T \#}Q = \nu
 \ee
 where $P^{\mu}_{-T}$ is the Langevin dyanmics for $U$ over $[-T,T]$ with initial distribution $\mu$. Then, if a solution $\hat{Q}$ exists, it is in $\mathfrak{R}_{T}(U)$. 
\end{prop}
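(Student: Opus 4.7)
The plan is to exploit a disintegration of any candidate competitor along its endpoint marginal and then apply a conditional Jensen inequality using the strict convexity of $c$. Write $\mu^{U}_{-T,T}:=(\omega_{-T},\omega_T)\#P^{\mu}_{-T}$ for the joint endpoint law of the reference measure, and recall that by definition $P^{\mu}_{-T}$ disintegrates as
\[
P^{\mu}_{-T}(A)=\int_{\RD\times\RD} P^{x,y}_{-T,T}(A)\,\mu^{U}_{-T,T}(dx\,dy),
\]
since the bridges of $P^{\mu}_{-T}$ are precisely $P^{x,y}_{-T,T}$. I assume without loss of generality that $C(\hat Q,P^{\mu}_{-T})<+\infty$; otherwise the infimum equals $+\infty$ and any element of $\mathfrak R_{T}(U)$ with the prescribed marginals already achieves a lower value provided one exists.

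First I would set $\hat\mu:=(\omega_{-T},\omega_T)\#\hat Q$ and, using $\hat Q\ll P^{\mu}_{-T}$, note that $\hat\mu\ll \mu^{U}_{-T,T}$. Disintegrating $\hat Q$ over its endpoints gives $\hat Q(d\omega)=\int \hat Q^{x,y}(d\omega)\,\hat\mu(dx\,dy)$, and the Radon--Nikodym density factorises as
\[
\frac{d\hat Q}{dP^{\mu}_{-T}}(\omega)=g(\omega_{-T},\omega_T)\cdot h(\omega),\qquad g(x,y):=\frac{d\hat\mu}{d\mu^{U}_{-T,T}}(x,y),\qquad h(\omega):=\frac{d\hat Q^{\omega_{-T},\omega_T}}{dP^{\omega_{-T},\omega_T}_{-T,T}}(\omega),
\]
where $h$ satisfies $E_{P^{x,y}_{-T,T}}(h)=1$ for $\hat\mu$-a.e.\ $(x,y)$.

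Next I would introduce the natural competitor
\[
\tilde Q(d\omega):=\int_{\RD\times\RD} P^{x,y}_{-T,T}(d\omega)\,\hat\mu(dx\,dy),
\]
which by construction lies in $\mathfrak R_{T}(U)$, shares with $\hat Q$ the joint endpoint law $\hat\mu$, and therefore has the prescribed marginals $\mu$ and $\nu$. Its density against $P^{\mu}_{-T}$ is simply $g(\omega_{-T},\omega_T)$. Now I would compute, via the disintegration,
\[
C(\hat Q,P^{\mu}_{-T})=\int \mu^{U}_{-T,T}(dx\,dy)\,E_{P^{x,y}_{-T,T}}\bigl(c(g(x,y)\,h(\omega))\bigr),
\]
and apply Jensen's inequality to the (convex) function $c$ conditionally on the endpoints, using $E_{P^{x,y}_{-T,T}}(h)=1$:
\[
E_{P^{x,y}_{-T,T}}\bigl(c(g(x,y)\,h)\bigr)\ \geq\ c\bigl(g(x,y)\bigr)=E_{P^{x,y}_{-T,T}}\bigl(c(g(x,y))\bigr).
\]
Integrating against $\mu^{U}_{-T,T}$ yields $C(\hat Q,P^{\mu}_{-T})\geq C(\tilde Q,P^{\mu}_{-T})$.

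Finally, since $\hat Q$ is a minimiser and $\tilde Q$ is admissible, both sides are equal; by the \emph{strict} convexity of $c$, equality in Jensen forces $h(\omega)$ to be $P^{x,y}_{-T,T}$-almost surely constant (hence equal to $1$) for $\hat\mu$-a.e.\ $(x,y)$. This is exactly the statement $\hat Q^{x,y}=P^{x,y}_{-T,T}$ for $\hat\mu$-a.e.\ endpoint pair, i.e.\ $\hat Q=P^{\hat\mu}_{-T,T}\in \mathfrak R_{T}(U)$. I expect the only delicate step to be a careful justification of the conditional Jensen inequality together with the pointwise equality case on a $\hat\mu$-full set; this is a standard measure-theoretic argument based on the regular conditional probabilities $\hat Q^{x,y}$, whose existence is guaranteed by the Polish structure of $\Omega^{x,y}_{-T,T}$.
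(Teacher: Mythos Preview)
Your proposal is correct and follows essentially the same route as the paper: disintegrate over the endpoints, apply the conditional Jensen inequality for the convex function $c$ to show that the reciprocal competitor $\tilde Q=P^{\hat\mu}_{-T,T}$ (the paper's $P^{\hat\pi}_{-T,T}$) has no larger cost, and then invoke strict convexity to conclude $\hat Q=\tilde Q$. The only cosmetic difference is that the paper phrases the Jensen step as the inequality $C(Q,P^{\mu}_{-T})\geq C(q,p)$ between costs of the endpoint marginals and then appeals to \emph{uniqueness} of the minimiser (which follows from strict convexity of $Q\mapsto C(Q,P^{\mu}_{-T})$), whereas you argue directly via the equality case in Jensen to force $h\equiv 1$; both arguments are equivalent uses of the strict convexity of $c$.
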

The Schr\"odinger problem is obtained with $c(z)=z \log z$. But other choices are interesting: $c(z)=z^2$ yields the $L^2$-distance when $Q \ll P$, and $c(z)= z^{1/2}$
the Hellinger distance when $Q \ll P$.

	\vspace{1cm}
\section{Proofs of the results}

\subsection{Proof of Lemma \ref{genhtrans}}
In fact, we prove a stronger form of it, which we are using to prove the next results. We fix two potentials $U^1$ and $U^2$, call $P^{1,x,y}_{-T,T}$,$P^{2,x,y}_{-T,T}$ the corresponding bridges and $U^{1,y}_T$, $U^{2,y}_T$ the bridge-potentials.
\begin{lemma}\label{genhtrans2} The following decomposition holds:
\be\label{eq:driftreprs2}
- \nabla U^{2,y}_{T}(t,z)  = -\nabla U^{1,y}_T(t,z)+ \nabla \log \psi(t,z) 
\ee

with 

\be\label{def:psi2} 
 \psi(t,z):= E_{P^{1,z,y}_{t,T}} 
\left( \exp \Big(-\int_{t}^{T}  \scrU^2-\scrU^1 (s,\omega_s) ds \Big) \right)  
\ee
\end{lemma}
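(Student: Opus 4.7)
My plan is to derive the decomposition by identifying the Radon--Nikodym derivative $dP^{2,x,y}_{-T,T}/dP^{1,x,y}_{-T,T}$ in two different ways and matching them.

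First, I would use the density formula (stated in the introduction, from \cite{LeKre93}) $dP^{i,x,y}_{-T,T}/dW^{x,y}_{-T,T} = Z_i^{-1}\exp(-\int_{-T}^T \scrU^i(s,\omega_s)\,ds)$ for $i=1,2$ and divide the two expressions. This yields
\[
\frac{dP^{2,x,y}_{-T,T}}{dP^{1,x,y}_{-T,T}} = \frac{Z_1}{Z_2}\exp\Bigl(-\int_{-T}^T (\scrU^2-\scrU^1)(s,\omega_s)\,ds\Bigr).
\]
Next, since $P^{1,x,y}_{-T,T}$ is Markov and conditioning on $\cF_t$ together with the pinning $\omega_T = y$ gives back a bridge $P^{1,\omega_t,y}_{t,T}$ on $[t,T]$, I would take conditional expectations to identify
\[
E_{P^{1,x,y}_{-T,T}}\Bigl[\tfrac{dP^{2,x,y}_{-T,T}}{dP^{1,x,y}_{-T,T}}\,\Big|\,\cF_t\Bigr] = \frac{Z_1}{Z_2}\exp\Bigl(-\int_{-T}^t(\scrU^2-\scrU^1)(s,\omega_s)\,ds\Bigr)\,\psi(t,\omega_t),
\]
with $\psi$ defined by \eqref{def:psi2}. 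Evaluating at $t=-T$ fixes the normalization $Z_1/Z_2 = 1/\psi(-T,x)$, so the left-hand side is the Doob martingale
\[
M_t := \frac{\psi(t,\omega_t)}{\psi(-T,x)}\exp\Bigl(-\int_{-T}^t(\scrU^2-\scrU^1)(s,\omega_s)\,ds\Bigr).
\]

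Now I would apply It\^o's formula to $\log\psi(t,\omega_t)$ under $P^{1,x,y}_{-T,T}$, whose driving Brownian motion I denote $M^1$, to obtain
\[
d\log M_t = \Bigl[\frac{(\partial_t + \scrL^{1,y}_T)\psi}{\psi} - (\scrU^2 - \scrU^1) - \tfrac{1}{2}|\nabla \log\psi|^2\Bigr](t,\omega_t)\,dt + \nabla \log\psi(t,\omega_t)\cdot dM^1_t.
\]
Since $M_t$ is a positive martingale, the finite-variation part must vanish, forcing the Feynman--Kac type identity $(\partial_t + \scrL^{1,y}_T)\psi = (\scrU^2-\scrU^1)\psi$ (which is in any case the natural PDE for $\psi$). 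Thus $M_t$ is precisely a Girsanov exponential with integrand $\nabla\log\psi(s,\omega_s)$, and Girsanov's theorem yields that under $P^{2,x,y}_{-T,T}$ the drift of $\omega_t$ is $-\nabla U^{1,y}_T + \nabla\log\psi$, which is \eqref{eq:driftreprs2}. The original Lemma \ref{genhtrans} is recovered by choosing $U^1\equiv 0$, so that $P^{1,x,y}_{-T,T} = W^{x,y}_{-T,T}$, $-\nabla U^{1,y}_T = -\nabla H^y_T$ and $\scrU^1 \equiv 0$.

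The main obstacle I foresee is the regularity of $\psi$: to justify It\^o's formula one needs $\psi$ to be $C^{1,2}$ and strictly positive on $[-T,T)\times\RD$. Positivity follows from the fact that $\scrU$ is bounded below by hypothesis \eqref{eq:H}, so the Feynman--Kac integrand is integrable, while smoothness follows from standard parabolic regularity applied to the PDE $(\partial_t+\scrL^{1,y}_T)\psi = (\scrU^2-\scrU^1)\psi$, using the assumed $C^4$ regularity of the potentials. A short argument approximating by truncations handles any integrability issues when applying It\^o's formula up to time $T$, where the Brownian bridge collapses to $y$.
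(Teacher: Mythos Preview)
Your proposal is correct and follows essentially the same route as the paper: both obtain the Radon--Nikodym derivative $dP^{2,x,y}_{-T,T}/dP^{1,x,y}_{-T,T}$ in the form $\psi(-T,x)^{-1}\exp(-\int_{-T}^T(\scrU^2-\scrU^1)\,ds)$, condition on $\cF_t$ to get the martingale $M_t$, apply It\^o's formula together with the Feynman--Kac PDE for $\psi$ to identify $M_t$ as the stochastic exponential of $\int \nabla\log\psi\cdot dB$, and conclude via Girsanov. The only cosmetic difference is that the paper first invokes the Feynman--Kac formula to assert the PDE and then uses it to cancel the drift in $dM_t$, whereas you read off the PDE from the vanishing of the finite-variation part of $d\log M_t$; the content is identical.
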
 
\begin{proof}
  In \cite[Sec V]{LeKre93} and \cite[Sec. 3]{conforti2016fluctuations} a Girsanov Theorem for reciprocal processes, and hence for bridges, is given. Adapted to our notation, it gives:  
\bes
  \frac{dP^{2,x,y}_{-T,T}}{ d P^{1,x,y}_{-T,T} } = \frac{1}{ \psi(-T,x)} \exp \left( -\int_{-T}^{T} \bar{\scrU}(s,\omega_s) ds\right): = M
  \ees
  where $\psi$ is given at \eqref{def:psi2} and $\bar{\scrU} = \scrU^2-\scrU^1$. Therefore:

\be\label{eq:Mt}
M_t := E_{P^{1,x,y}_{-T,T}} [M \vert \cF_t] = \exp \Big( - \int_{-T}^{t} \bar{\scrU}(s,\omega_s)ds  \Big) \frac{\psi(t,\omega_t)}{\psi(-T,x)}
\ee
where $\{\cF_T\}_{-T \leq t \leq T}$ is the canonical filtration on $\OMB$.
An application of the Feynman-Kac formula tells that $\psi$ solves:
\be\label{FKpde}
\partial_{t} \psi(t,z) + \frac{1}{2} \Delta \psi(t,z) -\nabla U^{1,y}_{T}\cdot \nabla \psi (t,z) - \bar{\scrU}(t,z)\psi(t,z) =0.
\ee
The desired boundary condition is easily derived from from \eqref{def:psi2}.
 Now we rewrite $M_t$ (defined in \eqref{eq:Mt}) as an exponential martingale. Using integration by parts, the PDE \eqref{FKpde}
 and recalling that under $P^{1,x,y}_{-T,T}$ the canonical process can be written as
  \bes
  \omega_t = - \int_{-T}^t \nabla U^{1,y}_T(s,\omega_s)ds + B_t
  \ees
 for some Brownian motion $(B_t)_{-T \leq t \leq T}$ , we obtain :
 \beas
d M_t &=&  \psi(t,\omega_t) d  \exp \Big(- \int_{-T}^{t} \bar{\scrU}(s,\omega_s) ds \Big) + \exp\Big(- \int_{-T}^{t} \bar{\scrU}(s,\omega_s) ds \Big)  d \psi(t,\omega_t)\\
&\stackrel{ \text{It\^o}}{=}& -\psi(t,\omega_t)  \exp\Big(- \int_{-T}^{t} \bar{\scrU}(s,\omega_s) ds \Big) \bar{\scrU}(t,\omega_t) dt \\
&+& \exp\Big( -\int_{-T}^{t} \bar{\scrU}(s,\omega_s) ds \Big)\{ \nabla \psi(t,\omega_t) \cdot d\omega_t + [\partial_t  \psi(t,\omega_t) + \frac{1}{2} \Delta   \psi(t,\omega_t) ] dt\} \\
&=&\exp\Big( -\int_{-T}^{t} \bar{\scrU}(s,\omega_s) ds \Big)\psi(t,\omega_t) \nabla \log \psi(t,\omega_t) \cdot dB_t  \\
&+& \exp\Big( -\int_{-T}^{t} \bar{\scrU}(s,\omega_s) ds \Big)\{   \partial_t \psi(t,\omega_t)+ \frac{1}{2} \Delta   \psi(t,\omega_t) -  \nabla U^{1,y} \cdot \nabla \psi(t,\omega_t) - \bar{\scrU} \psi (t,\omega_t)  \}dt \\
&\stackrel{\eqref{FKpde}}{=}& M_t \nabla \log \psi(t,\omega_t) \cdot dB_t 
 \eeas
Therefore $M_t$ is the exponential martingale associated with the martingale $ A_t = \int_{-T}^{t} \nabla \log \psi(t,\omega_t) \cdot dB_t$. Consider now a smooth function $f$ with bounded derivatives. We know that under $P^{1,x,y}_{-T,T}$ the process
\[
M^f_t:=f(\omega_t) - \int_{-T}^{t} \tilde{\scrL}_s(f)(\omega_s)ds
\]
is a local martingale, where 
\[ \tilde{\scrL}_s(f)(z) = \frac{1}{2} \Delta f(z) - \nabla U^{1,y}(s,z)\cdot \nabla f(z) \]
Girsanov's Theorem tells that under $P^{2,x,y}_{-T,T}$
\[
f(\omega_t) - \int_{-T}^{t} \tilde{\scrL}_s(f)(\omega_s)ds - \langle M^f,A\rangle_t
\]
is a local martingale. A standard computation using It\^o isometry yields
\[
\langle M^f,A\rangle_t = \int_{-T}^t \nabla \log \psi(s,\omega_s) \cdot \nabla f(\omega_s)ds
\]
from which the conclusion follows, since $P^{2,x,y}_{-T,T}$ solves the martingale problem associated with $ \tilde{\scrL}_s + \nabla \log \psi(s,\cdot) \cdot \nabla f$.
\end{proof}
\subsection{Proof of Theorem \ref{thm:grest}}%
We prove three preparatory Lemmas: the first one contains some explicit computations for Ornstein Uhlenbeck bridges.
\begin{lemma}\label{lemma:OUbridge}
Let $U$ be such that $\nabla \scrU = \alpha^2 z$. The following hold:
\begin{enumerate}[(i)]
\item
\[
\forall t \in [-T,T], z,v \in \RD:|v|=1 \quad \nabla^2 U^{y}_{T}(t,z)[v,v] = \alpha \coth(\alpha(T-t))
\]

\item The following identity in distribution holds for any $x,x',y \in \RD$:
\be\label{eq:OUbridgetranslation}
\forall F \, \text{ measurable} , \quad E_{P^{x',y}_{-T,T}}(F(\omega)) = E_{\LBR}(F( \omega + (x'-x)\xi_T ))
\ee
where
\bes
\xi_T(t):= \frac{\sinh(\alpha(T-t) )}{\sinh(2 \alpha T)} 
\ees
\item If we define $S_T: \Omega^{x,y}_{-T,T} \rightarrow \Omega^{x,y}_{-1,1}$ through $(S_T\omega)_t = \omega_{Tt}$ it holds that
\be\label{eq:timechange}
\lim_{T \downarrow 0} S_T \# \LBR =  \delta_{\varphi}
\ee
where $\varphi_t = \frac{(1-t)}{2}x+\frac{(1+t)}{2}y$ for $t \in [-1,1]$.
\end{enumerate}
\end{lemma}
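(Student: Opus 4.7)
All three claims concern an Ornstein--Uhlenbeck bridge and can be reduced, in one way or another, to explicit Gaussian computations, since (up to an additive constant) the assumption $\nabla \scrU(z) = \alpha^2 z$ forces $U(z) = \frac{\alpha}{2}|z|^2$ and makes $\LBR$ a Gaussian measure whose mean and covariance are known in closed form.

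For part (i), the plan is to use Doob's $h$-transform representation $U^y_T(t,z) = U(z) - \log h(t,z)$ with $h(t,z) = p_{T-t}(z,y)$, where $p_s$ is the OU transition density. Since
\[
p_s(z,y) = (2\pi\sigma_s^2)^{-d/2}\exp\!\left(-\frac{|y - ze^{-\alpha s}|^2}{2\sigma_s^2}\right), \qquad \sigma_s^2 = \frac{1-e^{-2\alpha s}}{2\alpha},
\]
a direct differentiation gives $\nabla^2 U^y_T(t,z) = \left(\alpha + \frac{e^{-2\alpha(T-t)}}{\sigma_{T-t}^2}\right)\mathrm{Id}$, and the algebraic identity $\alpha + \frac{2\alpha e^{-2\alpha\tau}}{1-e^{-2\alpha\tau}} = \alpha\coth(\alpha\tau)$ closes the calculation.

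For part (ii), once (i) is in hand one reads off the bridge drift, namely
\[
-\nabla U^y_T(t,z) = -\alpha z\,\coth(\alpha(T-t)) + \frac{\alpha y}{\sinh(\alpha(T-t))}.
\]
Given a solution $X^{x'}$ of the corresponding linear SDE with $X^{x'}_{-T}=x'$, set $Y_t := X^{x'}_t - (x'-x)\xi_T(t)$. A short check shows $\dot\xi_T(t) = -\alpha\coth(\alpha(T-t))\,\xi_T(t)$ together with $\xi_T(-T)=1$ and $\xi_T(T)=0$, so substituting into the SDE cancels exactly the shift in the drift and gives $dY_t = -\nabla U^y_T(t,Y_t)\,dt + dB_t$ with $Y_{-T}=x$, $Y_T=y$. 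Uniqueness in law then yields $Y \sim \LBR$, which is the claimed identity.

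For part (iii), the same explicit computations show that $\LBR$ is Gaussian with mean $m(t) = \frac{x\sinh(\alpha(T-t)) + y\sinh(\alpha(T+t))}{\sinh(2\alpha T)}$ and covariance of order $\frac{\sinh(\alpha(T-t))\sinh(\alpha(T+t))}{\alpha\sinh(2\alpha T)}\,\mathrm{Id}$. Taylor expanding $\sinh$ at $0$, the rescaled mean $\mathbb{E}[(S_T\omega)_t] = m(Tt)$ converges pointwise to $\varphi_t$ as $T\downarrow 0$, while the rescaled variance is $O(T)$ and thus vanishes; this yields convergence of the finite-dimensional marginals of $S_T\#\LBR$ to $\delta_\varphi$. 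To upgrade this to weak convergence on $C([-1,1],\RD)$ I would establish tightness via Kolmogorov's criterion applied to the increments of the Gaussian process $S_T\omega$, whose centered fourth moments scale as $O(T^2|t-s|^2)$ uniformly for $T$ small. The main obstacle I expect is precisely this tightness step: the covariance of the rescaled bridge degenerates at the endpoints $t=\pm 1$, so one needs a modulus-of-continuity estimate that is uniform in the vanishing parameter $T$, which requires a careful treatment of the Gaussian increments near the pinned ends rather than a direct application of the bound at interior times.
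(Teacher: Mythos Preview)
Your arguments for parts (i) and (ii) match the paper's essentially line for line: both use the $h$-transform with the explicit OU transition density for (i), and both verify (ii) by shifting a solution of the linear bridge SDE by $(x'-x)\xi_T$ and checking, via $\dot\xi_T = -\alpha\coth(\alpha(T-t))\xi_T$, that the shifted process solves the same SDE from the other starting point.

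For part (iii) you take a genuinely different route. The paper does not use the Gaussian mean/covariance at all; instead it observes that after the time change $S_T$ the law $S_T\#\LBR$ solves
\[
dX_t = T\,\nabla U^y_T(Tt,X_t)\,dt + \sqrt{T}\,dB_t,\qquad X_{-1}=x,
\]
notes that $T\,\nabla U^y_T(Tt,z)\to (y-z)/(1-t)$ and the diffusion coefficient vanishes as $T\downarrow 0$, and identifies the limiting deterministic ODE whose solution is $\varphi$. Your approach---explicit mean $m(Tt)\to\varphi_t$, variance $O(T)\to 0$, then tightness via Kolmogorov---is more hands-on and fully exploits the Gaussian structure, whereas the paper's SDE-to-ODE argument is more robust (it would survive beyond the exactly solvable case) but is left at the level of ``after some standard computations''. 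Regarding your stated obstacle: the concern about endpoint degeneracy is milder than you suggest, since the rescaled covariance $\frac{\sinh(\alpha T(1-s))\sinh(\alpha T(1-t))}{\alpha\sinh(2\alpha T)}\sim \tfrac{T}{2}(1-s)(1-t)$ for $s\le t$ is, up to the factor $T$, exactly that of a standard Brownian bridge on $[-1,1]$, so the increment bounds needed for Kolmogorov hold uniformly in $T$ with no special behaviour at $\pm 1$.
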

\begin{proof}
\begin{enumerate}
\item[(i)] First, let us observe that if $ U = \frac{\alpha}{2} |z|^2$, then $\nabla \scrU = \alpha^2 z $
, so w.l.o.g $\LBR$ is the bridge of the OU process with mean reversion $\alpha$. The $h$-transform formula tells that
\bes
-\nabla U^y_T(t,z) = - \alpha z + \nabla_z \log p(T-t,z,y)
\ees
where $p(T-t,z,y)$ of the Ornstein-Uhlenbeck process. The Ornstein Uhlenbeck transition density is known explicitly. We have:
\bes
\log(p_{T-t}(z,y)) =  -\alpha\sum_{j=1}^d\frac{(y_j - \exp(-\alpha(T-t)) z_j )^2} {(1- \exp(-2 \alpha(T-t) ) ) } + c
\ees
where $c$ is some constant which does not depend on $z$.
With a direct computation one gets that:
\be\label{eq:OUdrift}
-\nabla U^y_T(t,z) = - \alpha [  \coth(\alpha(T-t))z - \frac{1}{\sinh(\alpha(T-t))} y ]
\ee
The conclusion then follows by taking another derivative.
\item[(ii)] 
Let $X_t$ be a solution to 
\bes
dX_t = - \nabla U^y_T(t,X_t)dt + dB_t, \quad X_{-T}=x
\ees
Then $Y := X + (x'-x) \xi_T$ satisfies
\bes
d Y_t =  [- \nabla U^y_{T}(t,Y_t-(x'-x)\xi_T(t))+(x'-x)\dot{\xi}_T(t) ] dt + dB_t, \quad Y_{-T}=x +(x'-x)\xi_{T}(-T)
\ees
Since $\xi_T(-T)=1$, the conclusion follows if we show that $- \nabla U^y_{T}(t,z-(x'-x)\xi_T(t))+(x'-x)\dot{\xi}_T(t) = \nabla U^y_T(t,z)$ everywhere. This fact can be checked with a direct computation using \eqref{eq:OUdrift}.
\item[(iii)] Using Brownian scaling and change of variables, an elementary computation shows that $S_T \omega \# \LBR$
is distributed as the law of
\bes
dX_t = T \nabla U^y_T(T s,X_s)ds + \sqrt{T} B_{t}, \quad X_{-1}=x,t \in [-1,1]
\ees
Observing that as $ \lim_{T \downarrow 0} T \nabla U^y_T(T t,z)=(y-z)/(1-t) $ and that the solution of the limiting ODE
\bes
\dot{\psi}_t = \frac{y-\psi_t}{1-t}, \quad \psi_{-1}=x
\ees
is precisely $\varphi$, the conclusion follows after some standard computations.
\end{enumerate}
\end{proof}
The next Lemma is a robust version of the convexity estimate of the former Lemma.
 \begin{lemma}\label{lem:conc}
Let \eqref{eq:convass} hold. Then 
\bes
\forall -T \leq t \leq T,  z \in \RD, \quad \nabla^2 U^{y}_T(t,z)[v,v] \geq \alpha \coth(\alpha(T-t)).
\ees

 \end{lemma}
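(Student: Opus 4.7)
The plan is to use Lemma \ref{genhtrans2} with the reference potential $U^{1}$ chosen so that $\scrU^{1}(z) = \tfrac{\alpha^2}{2}|z|^2$ (i.e.\ $P^{1,x,y}_{t,T}$ is the bridge of the Ornstein--Uhlenbeck process with mean reversion $\alpha$). With this choice Lemma \ref{genhtrans2} yields
\begin{equation*}
U^{y}_{T}(t,z) = U^{1,y}_{T}(t,z) - \log\psi(t,z), \qquad \psi(t,z) = E_{P^{1,z,y}_{t,T}}\!\left(\exp\!\left(-\int_{t}^{T} \bar\scrU(s,\omega_s)\,ds\right)\right),
\end{equation*}
where $\bar\scrU := \scrU - \scrU^{1}$. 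By Lemma \ref{lemma:OUbridge}(i), $\nabla^2 U^{1,y}_{T}(t,z)[v,v] = \alpha\coth(\alpha(T-t))$ for every unit vector $v$. Since assumption \eqref{eq:convass} reads $\nabla^2\scrU \geq \alpha^2\, I$, the difference $\bar\scrU$ is convex in $z$ for every $s$. Thus the statement will be proven once I show that $\log\psi(t,\cdot)$ is concave, because then
\begin{equation*}
\nabla^2 U^{y}_{T}(t,z)[v,v] = \nabla^2 U^{1,y}_{T}(t,z)[v,v] - \nabla^2 \log\psi(t,z)[v,v] \geq \alpha\coth(\alpha(T-t)).
\end{equation*}

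To prove log-concavity of $\psi(t,\cdot)$ I would exploit the affine structure of OU bridges given by Lemma \ref{lemma:OUbridge}(ii), suitably transposed to the interval $[t,T]$. Fix a reference point $z_0$ and set $\xi(s) := \sinh(\alpha(T-s))/\sinh(\alpha(T-t))$; the translation identity gives the pathwise representation
\begin{equation*}
\psi(t,z) = E_{P^{1,z_0,y}_{t,T}}\!\left( \exp\!\left( -\int_{t}^{T} \bar\scrU\bigl(s,\omega_s + (z-z_0)\xi(s)\bigr)ds \right)\right).
\end{equation*}
For each $s$, the map $(z,\omega_s)\mapsto \omega_s + (z-z_0)\xi(s)$ is affine, and $\bar\scrU(s,\cdot)$ is convex, so $(z,\omega)\mapsto \int_t^T \bar\scrU(s,\omega_s+(z-z_0)\xi(s))\,ds$ is jointly convex. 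Consequently the integrand is jointly log-concave in $(z,\omega)$. Since $P^{1,z_0,y}_{t,T}$ is a Gaussian measure (hence with log-concave density on any finite-dimensional projection), Pr\'ekopa--Leindler applied to the joint density of the integrand against the bridge measure implies that the marginal $z\mapsto \psi(t,z)$ is log-concave.

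The main obstacle is the rigorous deployment of Pr\'ekopa--Leindler in the infinite-dimensional path setting. I would handle this by a finite-dimensional approximation: discretize $[t,T]$ by a mesh $t=s_0<s_1<\dots<s_n=T$, replace the integral $\int_t^T \bar\scrU(s,\omega_s)\,ds$ by a Riemann sum evaluated at the sample points $\omega_{s_1},\dots,\omega_{s_{n-1}}$, and note that the joint law of $(\omega_{s_1},\dots,\omega_{s_{n-1}})$ under $P^{1,z_0,y}_{t,T}$ is Gaussian with log-concave density on $(\R^d)^{n-1}$. The finite-dimensional Pr\'ekopa--Leindler theorem then gives log-concavity of the discretized version of $\psi$ in $z$; passing to the limit $n\to\infty$ using dominated convergence (justified by $\inf \scrU > -\infty$ from assumption \eqref{eq:H}) preserves log-concavity. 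Once log-concavity is established one differentiates twice in $z$ to obtain $\nabla^2\log\psi(t,z)[v,v]\leq 0$, and the inequality on $\nabla^2 U^{y}_T$ follows.
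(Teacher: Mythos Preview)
Your proposal is correct and follows essentially the same strategy as the paper: apply Lemma~\ref{genhtrans2} with the Ornstein--Uhlenbeck reference $U^1(z)=\tfrac{\alpha}{2}|z|^2$, reduce the claim to log-concavity of $\psi(t,\cdot)$, establish the latter via a time-discretization and Pr\'ekopa--Leindler, and conclude using the explicit Hessian of the OU bridge from Lemma~\ref{lemma:OUbridge}(i). The only cosmetic difference is that you invoke the translation identity of Lemma~\ref{lemma:OUbridge}(ii) to make the affine dependence on $z$ explicit before applying Pr\'ekopa, whereas the paper writes the discretized bridge expectation directly as a ratio of Gaussian integrals and appeals to the quantitative Brascamp--Lieb version of Pr\'ekopa--Leindler that handles such ratios; both arrangements encode the same structural fact and yield the same conclusion.
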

 \begin{proof}
 Using Lemma \ref{genhtrans2} with the choices $U^1:=\frac{\alpha}{2}|z|^2$ and $U^2:=U$, we have
 \bes
U^{2,y}_{T}(t,z) - U^{1,y}_T(t,z) =U^{y}_{T}(t,z) - U^{1,y}_T(t,z)
 =   - \log  \underbrace{E_{P^{1,z,y}_{t,T}}\left( \exp(-\int_{t}^{T}  \mathscr{U}(\omega_s)-\frac{\alpha^2}{2}|\omega_s|^2 ds) \right)}_{(*)}  
 \ees
 where $P^{1,z,y}_{t,T}$ is the Ornstein-Uhlenbeck bridge (i.e. $U(z) = \frac{\alpha}{2}|z|^2 $).
 
Defining $\bar{\scrU}(z):=\mathscr{U}(z)-\frac{\alpha^2}{2}|z|^2$  we can rewrite $(*)$ by discretizing time as
 \be\label{eq:discretise}
\lim_{N \uparrow + \infty} \frac{\int_{\R^{d(N-1)}} \exp\Big( \sum_{i=0}^{n-1}-\bar{\scrU}(x_i)(T-t)/N + \log(p_{(T-t)/N}(x_i,x_{i+1})\Big) dx_1..dx_{n-1} }{\int_{\R^{d(N-1)}} \exp\Big( \sum_{i=0}^{n-1} + \log(p_{(T-t)/N}(x_i,x_{i+1})\Big) dx_1..dx_{n-1} }
 \ee
where $p_{(T-t)/N}(.,.)$ is the transition density of the (non pinned) Ornstein Uhlenbeck process and we adopted the convention that $x_0=z,x_n=y$. The hypothesis \eqref{eq:convass} tells that $\bar{\scrU}$ is a convex function. Moreover, since the Ornstein Uhlenbeck process is a Gaussian process, the function $\sum_{i=0}^{n-1} \log(p_{(T-t)/N})(x_i,x_{i+1})$ is, up to an affine transformation, a negative quadratic form in the variables $z=x_0,x_1...,x_N=y$ . 
These observations entitle  us to apply Pr\'ekopa-Leindler inequality in one of its quantitative versions, see e.g. \cite[Th. 4.3]{BrLieb76} or \cite[Th.13.13]{Simon2011} to conclude that for any $N$ the quantity in \eqref{eq:discretise}, viewed as function of $y$ and $z$, is log-concave. Passing to the limit as $N \rightarrow +\infty$ we obtain that 
 $(*)$ is log-concave as a function of $z$ (and $y$). Therefore for any $t\in [-T,T]$ and $z,v \in \RD$ such that $|v|=1$ we have:
 \bes
 \nabla^2 U^{y}_T(t,z)[v,v] \geq \nabla^2 U^{1,y}_T(t,z)[v,v] =\alpha \coth(\alpha(T-t)),
 \ees
where, since $U^1 = \frac{\alpha}{2} |z|^2$, we used the explicit calculations of point (i) of Lemma \ref{lemma:OUbridge} to derive the last equality .
\end{proof}
The third Lemma is about time reversal.
\begin{lemma}\label{lemma:timerev}
Fix $T>0$, and let $U$ be time-homogeneous. Denote by $\omega^*$ the time reversal of $\omega$, i.e. $(\omega^*_t)_{-T \leq t \leq T} = (\omega_{-t})_{ -T \leq t \leq T}$.
We have the following equality in distribution:
\bes
\omega^* \# P^{x,y}_{-T,T} = P^{y,x}_{-T,T}.
\ees
\end{lemma}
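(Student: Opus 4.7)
The plan is to reduce the claim to two elementary symmetries via the Radon--Nikodym formula
$$\frac{dP^{x,y}_{-T,T}}{dW^{x,y}_{-T,T}}(\omega) = Z_{x,y}^{-1}\exp\!\Big(-\!\int_{-T}^{T}\mathscr{U}(\omega_s)\,ds\Big)$$
recorded in the introduction. The first symmetry I would establish is that the Brownian bridge is itself invariant under time reversal, in the sense that $\omega^*\#W^{x,y}_{-T,T}=W^{y,x}_{-T,T}$. This I would verify on finite-dimensional marginals using the explicit Gaussian conditional density of Brownian motion pinned at the two endpoints (which depends only on time differences and on the two unordered endpoints), and then extend to the law on path space by a monotone class argument. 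The second symmetry comes from time-homogeneity of $U$, hence of $\mathscr{U}$: by the change of variable $u=-s$ one obtains $\int_{-T}^{T}\mathscr{U}(\omega^*_s)\,ds=\int_{-T}^{T}\mathscr{U}(\omega_u)\,du$, so the above Radon--Nikodym density is itself invariant under $\omega\mapsto\omega^*$.

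Once these two ingredients are in place, the statement follows by a direct unrolling. For any bounded measurable $F$ on $\Omega_{-T,T}$,
\begin{align*}
E_{P^{x,y}_{-T,T}}[F(\omega^*)] &= Z_{x,y}^{-1}\, E_{W^{x,y}_{-T,T}}\!\Big[F(\omega^*)\exp\!\Big(-\!\int_{-T}^{T}\mathscr{U}(\omega_s)\,ds\Big)\Big] \\
&= Z_{x,y}^{-1}\, E_{W^{x,y}_{-T,T}}\!\Big[F(\omega^*)\exp\!\Big(-\!\int_{-T}^{T}\mathscr{U}(\omega^*_s)\,ds\Big)\Big] \\
&= Z_{x,y}^{-1}\, E_{W^{y,x}_{-T,T}}\!\Big[F(\omega)\exp\!\Big(-\!\int_{-T}^{T}\mathscr{U}(\omega_s)\,ds\Big)\Big].
\end{align*}
Taking $F\equiv 1$ identifies $Z_{x,y}=Z_{y,x}$, after which the right-hand side is exactly $E_{P^{y,x}_{-T,T}}[F]$, yielding the claimed equality of laws.

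I do not anticipate any substantial obstacle: the whole argument rests on the density representation of the bridge against the Brownian bridge, which is already at our disposal, combined with the two elementary invariances stated above. The only step deserving care is the time-reversal symmetry of the Brownian bridge itself, but this reduces to the observation that the Gaussian heat kernel satisfies $p_t(u,v)=p_t(v,u)$ and depends on time only through the length of the interval, so that the finite-dimensional marginals of $W^{x,y}_{-T,T}$ and $\omega^*\#W^{y,x}_{-T,T}$ coincide.
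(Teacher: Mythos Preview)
Your proposal is correct and follows essentially the same route as the paper's own proof: both arguments rest on the density formula of $P^{x,y}_{-T,T}$ against the Brownian bridge, the time-reversal symmetry $\omega^*\#W^{x,y}_{-T,T}=W^{y,x}_{-T,T}$, and the invariance of $\int_{-T}^{T}\mathscr{U}(\omega_s)\,ds$ under $\omega\mapsto\omega^*$ coming from time-homogeneity, followed by the same chain of equalities. Your explicit identification of $Z_{x,y}=Z_{y,x}$ via $F\equiv 1$ is a nice touch that the paper leaves implicit in the normalization.
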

\begin{proof}
First, observe that if $U$ is time-homogeneous, then so is $\mathscr{U}$. Using the Girsanov Theorem as in \cite{LeKre93}:
\bes
\frac{d P^{x,y}_{-T,T}}{ d W^{x,y}_{-T,T}} = \frac{\exp( - \int_{-T}^{T} \scrU(\omega_s) ds)}{ E_{W^{x,y}_{-T,T}} ( \exp( -\int_{-T}^{T} \mathscr{U}(\omega_s) ds ) ) }
\ees
 It is easy to see that the conclusion holds for the Brownian bridge, i.e.
\bes
\omega^* \# W^{x,y}_{-T,T}= W^{y,x}_{-T,T}
\ees
But then, observing that $ \int_{-T}^{T}\mathscr{U}(\omega_s)ds = \int_{-T}^{T}\mathscr{U}(\omega^*_s)ds $, we have, for any test function $G$:
\beas
E_{P^{x,y}_{-T,T}} ( G(\omega^*) ) &=&   \frac{1}{ E_{W^{x,y}_{-T,T}} ( \exp( -\int_{-T}^{T} \mathscr{U}(\omega_s) ds ) ) }E_{W^{x,y}_{-T,T}} \left( \exp( - \int_{-T}^{T} \mathscr{U}(\omega_s) ds )  G(\omega^*) \right) \\
&=&\frac{1}{ E_{W^{x,y}_{-T,T}} ( \exp( -\int_{-T}^{T} \mathscr{U}(\omega^*_s) ds ) ) }E_{W^{x,y}_{-T,T}} \left( \exp( - \int_{-T}^{T} \mathscr{U}(\omega^*_s) ds )  G(\omega^*) \right) \\
&=&\frac{1}{ E_{W^{y,x}_{-T,T}} ( \exp( -\int_{-T}^{T} \mathscr{U}(\omega_s) ds ) ) }E_{W^{y,x}_{-T,T}} \left( \exp( - \int_{-T}^{T} \mathscr{U}(\omega_s) ds )  G(\omega) \right)  \\
&=& E_{P^{y,x}_{-T,T}} ( G(\omega) ) 
\eeas
from which the conclusion follows, as $G$ is arbitrary.
\end{proof}

We can now prove Theorem \ref{thm:grest}. In the proof, we call a coupling of two \textit{probability measures} $P^i,i=1,2$ any coupling $(X^1,X^2)$ of two random variables whose image measure are $P^1$  and $P^2$ respectively.
\begin{proof}

$(ii) \Rightarrow (i)$
Let $v \in \RD, |v| = 1$ and define $f(z)= v \cdot z$. Moreover define for $i=1,..,d$:
\bes
\varphi^i(t) = \partial_{x_i} E_{\LBR} f(\omega_t), \quad \varphi(t) = ( \varphi^1,...,\varphi^d)(t).
\ees

Using the fact that $|\varphi(-T) |=| \nabla_x f(x)| = |v|=1 $, the gradient estimate \eqref{eq:grest} can be rewritten as:
\bes
|\varphi(t) |- |\varphi(-T) | \leq \frac{\sinh(\alpha(T-t))}{\sinh(2 \alpha T)} - 1
\ees
Differentiating the above inequality at $t=-T$ we get, using again $|\varphi(-T) |=1$ and $\varphi(-T) = v$:
\be\label{eq:shorttime1}
v \cdot \partial_t \varphi(-T) \leq -\alpha \coth(2 \alpha T)
\ee
Exchanging space and time derivatives:
\beas
 \partial_t \varphi^i(-T) &=& \partial_{x_i} \partial_t E_{\LBR} f(\omega_t) \\
 &=& \partial_{x_i} \Big( \frac{1}{2} \Delta f(x) - \nabla U^{y}_T(-T,x) \cdot \nabla f(x)\Big)\\
 &=& - \sum_{j=1}^d v^j \partial_{x_i} \partial_{x_j} U^{y}_T(-T,x)  
\eeas
Plugging this into \eqref{eq:shorttime1} we obtain
\bes
- \nabla^2 U^{y}_T(-T,x)[v,v] \leq - \alpha \coth(2 \alpha T)
\ees
 Using the representation for $U^y_T$ given at \eqref{eq:driftreprs2} with the choices $U^1= \frac{\alpha}{2}|z|^2, U^2=U$, the inequality above together with point (i) of Lemma \ref{lemma:OUbridge} imply that
\be\label{eq:tildeU}
\nabla^2 \log \psi(-T,x)[v,v] \leq 0, \ \text{where} \quad \psi(-T,z)=  E_{P^{1,x,y}_{-T,T}}\exp \Big(-\int_{-T}^T \tilde{\scrU}(\omega_s) ds  \Big), \tilde{\scrU}:= \scrU(z) - \frac{\alpha^2}{2}|z|^2 + d \alpha ,
\ee
and we recall that with the choices made $P^{1,x,y}_{-T,T}$ is the Ornstein Uhlenbeck bridge. \eqref{eq:tildeU} is equivalent to:
\be\label{eq:shorttime2}
\nabla^2 \psi(-T,x)[v,v] \psi(-T,x) - (\nabla \psi(-T,x)\cdot v)^2 \leq 0
\ee
Using the equality in distribution \eqref{eq:OUbridgetranslation} from point (ii) of Lemma \ref{lemma:OUbridge} and the definition of $\psi$, we obtain
\bes
\psi(-T,x+\varepsilon v ) = E_{P^{1,x+ \varepsilon v ,y}_{-T,T}}\exp\Big(-\int_{-T}^T \tilde{\scrU}(\omega_t) dt \Big) = E_{P^{1,x,y}_{-T,T}}\exp\Big(-\int_{-T}^T \tilde{\scrU}(\omega_t + \varepsilon v \xi_T(t)) dt \Big)
\ees
We  can use this last identity to compute the spatial derivatives of $\psi$. To ease notation, we define
\bes
M_T:= \exp\Big(-\int_{-T}^T \tilde{\scrU}(\omega_t)dt \Big)
\ees
We then obtain, after some computations:
\beas
\nabla^2 \psi(-T,x)[v,v] \psi(-T,x) &=&- E_{P^{1,x,y}_{-T,T}}\Big( M_T \int_{-T}^T \nabla^2 \tilde{\scrU} (\omega_t)[v,v] \xi^2_T(t) dt\Big) \psi(-T,x)\\
 &+&   E_{P^{1,x,y}_{-T,T}}\Big( M_T \Big( \int_{-T}^T \nabla\tilde{\scrU} (\omega_t) \cdot v  \, \xi_T(t) dt \Big)^2  \Big) \psi(-T,x) \\
\eeas
and
\bes
(\nabla \psi(-T,x)\cdot v )^2 = \left[ E_{P^{1,x,y}_{-T,T}}\Big( M_T\int_{-T}^T \nabla \tilde{\scrU}  (\omega_t)\cdot v\, \xi_T(t) dt\Big) \right]^2
\ees

Observing that $ \psi(-T,x)=E_{P^{1,x,y}_{-T,T}}(M_T) $ we can use  Cauchy-Schwarz inequality to conclude that
\bes
 E_{P^{1,x,y}_{-T,T}}\Big( M_T \big( \int_{-T}^T \nabla\tilde{\scrU} (\omega_t) \cdot v  \, \xi_T(t) dt \big)^2  \Big) \psi(-T,x)
- \left[E_{P^{1,x,y}_{-T,T}}\Big( M_T\int_{-T}^T \nabla \tilde{\scrU}  (\omega_t)\cdot v\, \xi_T(t) dt\Big)\right]^2 \geq 0.
\ees
Using this last fact and that $\psi(-T,x)>0$, \eqref{eq:shorttime2} implies that for all $T > 0$:
\be\label{eq:shorttime3}
E_{P^{1,x,y}_{-T,T}}\Big( M_T \, \int_{-T}^T \nabla^2 \tilde{\scrU}(\omega_t)[v,v] \xi^2_T(t) dt \Big) \geq 0
\ee
Recalling the definition of $S_T$ at point (iii) of Lemma \ref{lemma:OUbridge} we can rewrite:
\beas
\frac{1}{T} E_{P^{1,x,y}_{-T,T}}\Big( M_T \, \int_{-T}^T \nabla^2 \tilde{\scrU}[v,v] (\omega_t) \xi^2_T(t) dt \Big)\\
= E_{P^{1,x,y}_{-T,T}}\Big( (M_{1})^{T}(S_T\omega)\,  \int_{-1}^1 \nabla^2 \tilde{\scrU} ((S_T\omega)_{t}) [v,v]\xi^2_T(Tt) dt \Big)
\eeas
Setting $y=x$, equation \eqref{eq:timechange} from point (iii) of Lemma \ref{lemma:OUbridge} and the fact that $\lim_{T \downarrow 0}\xi_{T}(Tt)= \frac{(1-t)}{2}$ we have that
\bes
 \lim_{T \rightarrow 0} \frac{1}{T}E_{P^{1,x,x}_{-T,T}}\Big( M_T \, \int_{-T}^T \nabla^2 \tilde{\scrU}(\omega_t)[v,v] \xi_T(t) dt \Big) = \frac{2}{3}\nabla^2 \tilde{\scrU}(x)[v,v],
\ees
and because of \eqref{eq:shorttime3} we can conclude that $\nabla^2 \tilde{\scrU}(x)[v,v] \geq 0$.
Recalling the definition of $\tilde{\scrU}$ given at \eqref{eq:tildeU} the conclusion follows, since $x$ and $v$ arbitrarily chosen.

$(i) \Rightarrow (iii)$
We proceed as follows: we first show that there exist a coupling $(X^1,X^2)$ of $P^{x_1,y_1}_{T,T}$ and $P^{x_2,y_1}_{T,T}$ such that 
\be\label{eq:couplinginitialpoints}
|X^1_t -  X^2_t| \leq |x_2-x_1| \frac{ \sinh( \alpha(T-t) )  }{\sinh( 2 \alpha T)}
\ee
 holds almost surely for any $-T \leq t \leq T$.
Next we show that there exist a coupling $(Y^1,Y^2)$ of $P^{x_2,y_1}_{T,T}$ and $P^{x_2,y_2}_{T,T}$ such that
\be\label{eq:couplingfinalpoints}
|Y^1_t -  Y^2_t| \leq |y_2-y_1| \frac{ \sinh( \alpha(T+t) )  }{\sinh( 2 \alpha T)}
\ee
 holds almost surely for any $-T \leq t \leq T$.
 From the existence of these two couplings, it is then easy to deduce the conclusion.

Let us proceed with the first step. Consider some Brownian  motion $(B_{t})_{-T \leq t \leq T}$  and for $i=1,2$ let $X^{i}$ be a strong solution for
\bes
dX^i_{t} = -\nabla U^{y^1}_{T}(t,X^i_t)dt + dB_t, \quad X^i_{-T} =x^i 
\ees
We have, using standard computations, and the convexity estimate from Lemma \ref{lem:conc}:
\beas
&{}&|X^1_t -  X^2_t|^2 - |X^1_s-X^2_s|^2 \\
&=& 2\int_{s}^t (X^1_r -  X^2_r) \cdot ( \nabla U^{y_2,T}(X^2_r) - \nabla U^{y_1,T}( X^1_r) ) dr \\
&\leq & -2 \alpha \int_{s}^t \coth( \alpha(T-r) ) |X^1_r -  X^2_r|^2  dr
\eeas
Therefore the function $\varphi(t):= |X^1_t -  X^2_t|^2 $ satisfies the differential inequality:
\bes
\dot{\varphi}_{t} \leq -2 \alpha \coth(\alpha (T-t))  \varphi_t  \quad \varphi_{-T}=|x_2-x_1|^2 
\ees
Integrating this differential inequality and taking the square root we obtain \eqref{eq:couplinginitialpoints}.
In the same way, one shows the existence of a coupling $(Z^1,Z^2)$ of $P^{y_1,x_2}_{-T,T}$ and $P^{y_2,x_2}_{-T,T}$ such that 
\bes
|Z^1_t -  Z^2_t| \leq |y_2-y_1| \frac{ \sinh( \alpha(T-t) )  }{\sinh( 2 \alpha T)}
\ees
hold almost surely for any $-T \leq t \leq T$. But then thanks to the time reversal relation of Lemma \eqref{lemma:timerev} we have that $(Y^1,Y^2):=((Z^1)^*,(Z_2)^*)$ is a coupling of $P^{x_2,y_1}_{-T,T}$ and $P^{x_2,y_2}_{-T,T}$ such that \eqref{eq:couplingfinalpoints} holds.

$(iii)\Rightarrow (ii)$ The fact that a coupling implies a gradient estimate is well known, and can be proven easily by considering expectations in \eqref{eq:couplinginitialpoints}
,\eqref{eq:couplingfinalpoints} and diffeentiating in the $x$ and $y$ variable respectively.
\end{proof}
\subsection{Proof of Theorem \ref{thm:logsob}}
We first prove the following characterization of the Ornstein Uhlenbeck bridge based on the results of \cite{RT05}.
\begin{lemma}\label{lemma:isonormal}
Let $\nabla \scrU = \alpha^{2} z $. Then $P^{0,0}_{-T,T}$ is a centred Gaussian process on $\Omega^{0,0}_{-T,T}$ such that for all $g,h \in H$:
\bes
E_{P^{0,0}_{-T,T}}\Big(\int_{-T}^T h_t \cdot d \omega_t \int_{-T}^{T} g_t \cdot d \omega_t \Big):= \langle h,g \rangle_{\alpha}
\ees
\end{lemma}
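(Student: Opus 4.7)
The plan is to prove Gaussianity first, then identify the covariance. By point~(i) of Lemma~\ref{lemma:OUbridge} specialised to $y=0$, under $P^{0,0}_{-T,T}$ the canonical process solves the linear SDE
\[
d\omega_t = -\alpha\coth(\alpha(T-t))\,\omega_t\,dt + dB_t,\qquad \omega_{-T}=\omega_T=0,
\]
whose drift has no constant term precisely because both endpoints vanish. The unique strong solution is therefore a centred Gaussian process, and in particular each Wiener integral $I(h):=\int_{-T}^T h_t\cdot d\omega_t$ is a centred Gaussian variable. By polarisation it suffices to match $E(I(h)I(g))$ with $\langle h,g\rangle_\alpha$ on a dense class, for which I would take the smooth $h,g$ vanishing at $\pm T$.

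For such $h,g$, integration by parts (using $\omega_{\pm T}=0$) yields $I(h)=-\int \dot h_t\cdot\omega_t\,dt$ and hence
\[
E(I(h)I(g)) = \int_{-T}^T\!\!\int_{-T}^T \dot h_s\cdot\dot g_t\, K(s,t)\,ds\,dt,
\]
where $K(s,t)$ is the scalar bridge covariance, $E(\omega^i_s\omega^j_t)=\delta_{ij}K(s,t)$ (componentwise independence coming from the diagonal drift and standard Brownian noise). Solving the linear SDE explicitly, or alternatively starting from the unconditioned OU covariance and applying Gaussian conditioning on $\omega_T=0$, gives
\[
K(s,t) = \frac{\sinh(\alpha(T+s\wedge t))\,\sinh(\alpha(T-s\vee t))}{\alpha\sinh(2\alpha T)},
\]
which is precisely the Dirichlet Green's function of $-\partial_t^2+\alpha^2$ on $[-T,T]$.

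It remains to verify that $\int\!\!\int \dot h_s\cdot\dot g_t\, K(s,t)\,ds\,dt=\langle h,g\rangle_\alpha$. Writing $\langle h,g\rangle_\alpha=\langle\varphi,g\rangle_H$ with $\varphi$ the solution of \eqref{eq:ODEforh}, whose smooth form reads $\ddot\varphi-\alpha^2\varphi=\ddot h$ with $\varphi_{-T}=h_{-T}=0$ and $\int_{-T}^T\varphi=0$, one recognises $\varphi$ as the image of $h$ under the inverse of the operator whose Green's function is $K$; the identity then follows from two integrations by parts, the mean-zero constraint on $\varphi$ absorbing exactly the boundary term that would otherwise arise at $t=T$. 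Extension from smooth $h,g$ vanishing at $\pm T$ to general $h,g\in H$ follows by density and continuity of both bilinear forms on $H$.

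The main obstacle is this final Green's-function matching: one must carefully track the mean-zero normalisation in \eqref{eq:ODEforh}, which is present precisely to absorb the pinning at $t=T$, and check that the boundary terms in the double integration by parts cancel exactly. Since \cite{RT05} performs exactly this computation for Gaussian reciprocal diffusions, it is also possible to deduce the Lemma directly from their results.
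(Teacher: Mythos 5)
Your proof takes a genuinely different route from the paper's. The paper quotes Theorem 4.1 of \cite{RT05}, which gives directly the integration-by-parts formula
\[
E_{P^{0,0}_{-T,T}}\bigl(\langle DF,\varphi\rangle_H\bigr)=E_{P^{0,0}_{-T,T}}\Bigl(F\int_{-T}^T\bigl(\varphi_t-\alpha^2\textstyle\int_{-T}^t\int_{-T}^s\varphi_r\,dr\,ds\bigr)\cdot d\omega_t\Bigr)
\]
for $\varphi$ with $\int\varphi=0$, and then simply plugs in $F=\int g\cdot d\omega$ and the $\varphi$ of \eqref{eq:ODEforh}. Your argument instead works from scratch: Gaussianity from the linear SDE, explicit covariance $K$, recognition of $K$ as the Dirichlet Green's function of $-\partial_t^2+\alpha^2$, and a Green's-function/Sturm--Liouville duality with the operator in \eqref{eq:ODEforh}. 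What the paper's approach buys is brevity and avoidance of the boundary-term bookkeeping; what yours buys is self-containment and an explicit, checkable covariance formula, which you even verified correctly ($K(s,t)=\frac{\sinh(\alpha(T+s\wedge t))\sinh(\alpha(T-s\vee t))}{\alpha\sinh(2\alpha T)}$ is indeed the OU-bridge covariance).

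One concrete pitfall you should watch when executing the final matching: the ``smooth ODE form'' you quote, $\ddot\varphi-\alpha^2\varphi=\ddot h$ with $\varphi_{-T}=h_{-T}$ and $\int\varphi=0$, contains a typo. Differentiating \eqref{eq:ODEforh} once gives $\dot\varphi_t-\alpha^2\int_{-T}^t\varphi_r\,dr=\dot h_t$, hence the correct boundary condition at $-T$ is the Neumann one $\dot\varphi_{-T}=\dot h_{-T}$, not $\varphi_{-T}=h_{-T}$ (evaluating \eqref{eq:ODEforh} at $t=-T$ produces extra constant terms). If you use the Dirichlet condition as stated, the Green's-function identity actually fails. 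With the Neumann condition it goes through cleanly: set $\Psi_t:=\int_{-T}^T K(s,t)\dot h_s\,ds$, so that $-\ddot\Psi+\alpha^2\Psi=\dot h$ and $\Psi(\pm T)=0$; then $\int\!\!\int\dot h_s\cdot\dot g_t\,K(s,t)\,ds\,dt=-\int g_t\cdot\dot\Psi_t\,dt$, and $-\dot\Psi$ satisfies $\ddot{(-\dot\Psi)}-\alpha^2(-\dot\Psi)=\ddot h$, $\int(-\dot\Psi)=-[\Psi]_{-T}^T=0$, and $\dot{(-\dot\Psi)}(-T)=-\ddot\Psi(-T)=\dot h_{-T}$, which are exactly the constraints fixing $\varphi$ (and the resulting homogeneous problem $\ddot u-\alpha^2u=0$, $\dot u(-T)=0$, $\int u=0$ has only the trivial solution). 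So $-\dot\Psi=\varphi$ and the two bilinear forms agree. Modulo this boundary-condition correction, your argument is sound.
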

\begin{proof}
Theorem 4.1 in \cite{RT05} is a characterization of $P^{0,0}_{-T,T}$ implying that for any  smooth test function $F$ and any $\varphi \in H$ satisfying the loop condition $\int_{-T}^T \varphi_t dt =0$ the formula
\begin{equation*}
E_{P^{0,0}_{-T,T}} \Big( \langle DF, \varphi \rangle_{H} \Big) =E_{P^{0,0}_{-T,T}} \left( F \int_{-T}^{T} \Big( \varphi_t   - \alpha^2 \int_{-T}^t\int_{-T}^s \varphi_r dr\, ds \Big) \,\cdot d\omega_t  \right)
\end{equation*}
holds, where $DF$ is the Malliavin derivative. Fix $h,g$ and let $\varphi$ be defined by \eqref{eq:ODEforh}. We observe that $\varphi$ satisfies by construction the loop condition. If we choose $F = \int_{-T}^T g_t \cdot d \omega_t$, then $\langle DF,\varphi \rangle_{H} = \langle \varphi,g \rangle_H$. Using \eqref{eq:ODEforh} and taking advantage of the fact that $\int_{-T}^T c \cdot d \omega_t =0 $ almost surely for any constant $c$ since $P^{0,0}_{-T,T}(\omega_{T}=\omega_{-T}=0)=1$:
\bes
\langle \varphi,g \rangle_{H} = E_{P^{0,0}_{-T,T}}\Big(\int_{-T}^{T} g_t \cdot d \omega_t \int_{-T}^{T} h_t \cdot d \omega_t\Big) 
\ees
which is the desired conclusion.
\end{proof}
We can get back to the proof of the Theorem.
\begin{proof}
 Let $F=f\Big(\int_{-T}^{T} h^1_t \cdot d\omega_t,..,\int_{-T}^{T} h^n_t \cdot d\omega_t \Big)$. W.l.o.g. we can assume that $\langle h^i,h^j\rangle_{\alpha} = \delta_{ij}$. This means,thanks to Lemma \ref{lemma:isonormal}, that under the bridge of the Ornstein-Uhelnbeck process with mean reversion $\alpha$ (which we call ${}^{\alpha}P^{0,0}_{-T,T}$) the  law  ${}^{\alpha}\mu:= (\int_{-T}^{T} h^1_t \cdot d\omega_t,..,\int_{-T}^{T} h^n_t \cdot d\omega_t) \# {}^{\alpha}P^{0,0}_{-T,T}  $ is a centred Gaussian with identity covariance matrix. If we define 
$$\mu := (\int_{-T}^T h^1_t \cdot d\omega_t,..,\int_{-T}^T h^n_t \cdot d \omega_t) \# P^{0,0}_{-T,T}  $$ we have that
\bes
\frac{d \mu}{d {}^{\alpha} \mu} (z_1,..,z_n)\propto E_{{}^{\alpha}P^{0,0}_{-T,T}} \Big[ \exp\Big(-\int_{-T}^{T} \scrU(s,\omega_s) dt\Big) \Big| \int_{-T}^T h^1_t \cdot d\omega_t = z_1,.., \int_{-T}^{T} h^n_t \cdot d\omega_t = z_n \Big]
\ees

Using again the quantitative version of Prekopa Leindler inequality (see \cite[Th.13.3]{Simon2011}) as in the proof of Lemma \ref{lem:conc}  one has that $\frac{d \mu}{d {}^{\alpha} \mu}$ is log concave. Therefore $\mu$ has a log concave density against the standard normal distribution. Using well known results (see e.g. \cite[Thm 5.2]{Led01}) we obtain that  $\mu$ satisfies the LSI on $\R^n$ with constant $2$. Therefore
\be\label{eq:logsob}
E_{\mu}(f \log f)- E_{\mu}(f)\log E_{\mu}(f) \leq 2 E_{\mu}\Big(\frac{1}{f} \sum_{j=1}^n |\partial_i f|^2 \Big),
\ee
which means
\bes
E_{P^{0,0}_{-T,T}}(F \log F) - E_{P^{0,0}_{-T,T}}(F)\log E_{P^{0,0}_{-T,T}}(F)\leq 2 E_{P^{0,0}_{-T,T}}\Big(\frac{1}{F} \sum_{i=1}^n |\partial_i f|^2\Big(\int_{-T}^T h^1_t \cdot d\omega_t ,..,\int_{-T}^T h^n_t \cdot d \omega_t\Big) \Big)
\ees
Since $ \langle h^i,h^j\rangle_{\alpha} = \delta_{ij}$,
$$\sum_{i=1}^n |\partial_i f|^2\Big(\int_{-T}^T h^1_t \cdot d\omega_t ,..,\int_{-T}^T h^n_t \cdot d \omega_t \Big) = \langle DF,DF \rangle^2_{\alpha}$$
from which the conclusion follows.
\end{proof}
Here is the proof of Corollary \ref{cor:conc}.
\begin{proof}
Fix $t>0$ and let   $h^i_s:= \mathbf{1}_{[-T,t]}(s) \mathbf{e}_i$ for $i=1,..,d$ , where $\mathbf{e}_i$ is the i-th vector of the canonical basis of $\RD$. Then we have that $f\Big(\int_{-T}^T h^1_t d \omega_t,..,\int_{-T}^T h^d_t d \omega_t) = f(\omega_t)$. The inequality, applied to the simple functional $F(\omega)=f(\omega_t) = f\Big(\int_{-T}^T h^1_t \cdot  d \omega_t,..,\int_{-T}^T h^d_t \cdot d \omega_t)$ tells that the entropy of $F$ is bounded by:
\bes
2 \sum_{i=1}^{d} (\partial_i f(\omega_t))^2   \langle h^i, \varphi^i \rangle_{\alpha}
\ees
  where any $i=1,..,d$ $\varphi^i$ is the solution of the equation \eqref{eq:ODEforh}. It can be verified with a direct computation that 
  \bes
  \varphi^i_s= \Big[\frac{\sinh(\alpha(T-t))}{\sinh(2\alpha T)}\cosh(\alpha(T+s)) - \mathbf{1}_{[t,T]}(s) \cosh(\alpha (s-t)) \Big] \mathbf{e}_i.
  \ees
  This means that, independently from $i$ we have:
  \bes
  \langle h^i, \varphi^i \rangle_{\alpha}= \frac{\sinh(\alpha(T-t))}{\sinh(2\alpha T)}\int_{-T}^t\cosh(\alpha(T+s)) ds=\frac{\sinh(\alpha(T-t))\sinh(\alpha(T+t))}{\alpha \sinh(2\alpha T)}
  \ees
  Summing up, we have proven that $\omega_t \# P^{0,0}_{-T,T}$ satisfies the Logarithmic Sobolev inequality on $\RD$ with coefficient $2 \frac{\sinh(\alpha(T-t))\sinh(\alpha(T+t))}{\alpha \sinh(2\alpha T)}$. The fact that the LSI implies the desired concentration bound is well known, see e.g. \cite[Th. 5.3]{Led01}.
\end{proof}


\subsection{Proof of Theorem \ref{thm:comparisonprinciple} and Corollary \ref{cor:comparison}  }
  First, let us observe that if Corollary \ref{cor:comparison} is proven, Theorem \ref{thm:comparisonprinciple} follows by mixing over the bridge endpoints. Therefore, let us focus on the proof of the Corollary.

  \begin{proof}

According to Lemma \ref{genhtrans2}, the drift of $ P^{2,x,y}_{-T,T}$ is:
\bes
-\partial_z U^{1,y,T}(t,z) + \partial_{z} \log E_{P^{1,z,y}_{t,T}}\Big( \exp\Big(-\int_{t}^T [\scrU^2-\scrU^1](s,\omega_s)ds\Big) \Big)
\ees	
Using this representation we observe if we could show that for any $t$ the function \\ $z \mapsto   E_{P^{1,z,y}_{t,T}}\Big( \exp \Big(\int_{t}^T [\scrU^1-\scrU^2](s,\omega_s)ds\Big) \Big)$ is increasing, we would be done, thanks to the standard comparison principle for SDEs such as \cite[Ch.IX, Thm 3.7]{revuz2013continuous}.
To do this, we need to first introduce some notation.
For two paths $\omega,\tilde{\omega} \in \Omega_{t,T}$ we write $\omega \preceq \tilde{\omega} $ if and only if $\omega_s \leq  \tilde{\omega}_s$ for all $t \leq s \leq T$.
We say that a functional $F$ is \emph{increasing} if
$F(\omega) \leq F(\tilde{\omega})$ whenever $\omega \preceq \tilde{\omega}$.
 Take now $\tilde{z}\geq z$. Since $P^{1,z,y}_{t,T}$ and $P^{1,\tilde{z},y}_{t,T}$ are the laws of two one dimensional diffusion processes with the same generator and $\tilde{z}\geq z$ the comparison principle
for SDE tells that we can find a probability space $(\tilde{\Omega},\tilde{F},\tilde{P})$ and two maps $X^z,X^{\tilde{z}}$ defined on it such that
\bes X^z \# \tilde{P} = P^{1,z,y}_{t,T}, X^{\tilde{z}} \# \tilde{P} = P^{1,\tilde{z},y}_{t,T}, \quad \text{and} \, X^z \preceq X^{\tilde{z}} \, \tilde{P}-\text{almost surely}. \ees
Take now any increasing functional $F$. We have
\bes
E_{P^{1,z,y}_{t,T}}(F(\omega))= E_{\tilde{P}}(F(X^z)) \leq E_{\tilde{P}}(F(X^{\tilde{z}}) )= E_{P^{1,\tilde{z},y}_{t,T}}(F(\omega))
\ees
so that $z \mapsto E_{P^{1,z,y}_{t,T}}(F)$ is an increasing function. The hypothesis of the Theorem makes sure that $ \omega \mapsto \exp\Big(\int_{t}^T [\scrU^1-\scrU^2](s,\omega_s)ds\Big)$ is an increasing functional.  Therefore, for any $t$, the function  $ z \mapsto E_{P^{1,z,y}_{t,T}}\Big(\exp\Big(\int_{t}^T \scrU^1-\scrU^2(s,\omega_s)ds\Big)\Big)$ is increasing, which is the desired conclusion.
 \end{proof}

 \subsection{Proof of Theorem \ref{thm:Stein}}
For $f$ twice Frech\'et differentiable w.r.t. to $\|.\|_{\infty}$ we write $Df(\omega)[\omega']$ for the directional derivative of $f$ at $\omega$ in direction $\omega'$. Similarly, we write $D^2f(\omega)[\omega',\omega'']$ for the second derivative. If $\omega'=\omega''$ we simply write $D^2f(\omega)[\omega'].$ The proof relies on the following Lemma, where we construct and splve a Stein equation for the Brownian bridge, following closely the ideas of \cite{barbour1990stein} for the Brownian motion.
\begin{lemma}
Consider the semigroup $(S_u)_{u \geq 0}$ on $\Omega^{x,y}_{-T,T}$ defined for any $f$ bounded and continuous by:
\be\label{SPDEsemigroup}
S_uf(\omega) = \int_{\Omega_{-T,T}^{0,0}} f(\omega e^{-u} + \sigma(u) \tomega +(1-e^{-u})\varphi ) W^{0,0}_{-T,T}(d \tomega) 
\ee
 where $\sigma(u)= (1-e^{-2u})^{1/2}$ and $\varphi_{t} = \frac{(T-t)}{2T}x + \frac{(T+t)}{2T}y$.
The following holds:

\begin{enumerate}[(i)]

\item For any twice Frech\'et differentiable $f$ the generator $\scrA$ of $(S_u)_{u \geq 0}$ is 
\be\label{def:SPDEgenbrbr}
\scrA f(\omega) = -Df(\omega)[\omega - \varphi]+  \int_{\Omega_{-T,T}^{0,0}} D^2f(\omega)[\tomega]\, W^{0,0}_{-T,T}(d \tomega) .
\ee

\item $\LBBR$ satisfies the integration by parts formula:
\be\label{IPBFOUsemigroup}
E_{\LBBR}(f \scrA g) = E_{\LBBR}( g \scrA f) 
\ee
for all smooth $f,g$. In particular, $E_{\LBBR}(\scrA f) =0$.

\item Let $f$ be 1-Lipschitz and $E_{\LBBR}(f)=0$. The solution to the equation
 $$\scrA g(\omega) = f(\omega)  $$
is given by
\be\label{solutionsteineq}
g(\omega) = - \int_{0}^{+\infty}S_u f(\omega)du
\ee
and $g$ is 1-Lipschitz as well.

\item $P^{x,y}_{-T,T}$ satisfies the formula

\be\label{eq:Langevinbridgegen1}
E_{P^{x,y}_{-T,T}} \Big(\scrA_{\scrU} f \Big) = 0
\ee

where
\be\label{eq:Langevinbridgegen2}
\scrA_{\scrU} f = \scrA f -  \int_{\Omega^{0,0}_{-T,T}} \int_{-T}^{T}\big[ Df(\omega)[\tomega] \nabla \scrU (s,\omega_s) \cdot \tomega_s \big] \,\, ds \,  W^{0,0}_{-T,T}(d \tomega)
\ee
\end{enumerate}

\end{lemma}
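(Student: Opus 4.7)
The plan is to address the four parts in order, since each relies on the ones before. The picture underneath is that $S_u$ is the Mehler-type Ornstein--Uhlenbeck semigroup on Wiener space whose invariant measure is $\LBBR$, $\scrA$ is its generator, and (iv) is the standard tilting of this invariant measure by the exponential density to produce $\LBR$. For (i), I would Taylor-expand the argument of $f$ in $u$. Using $e^{-u}-1=-u+O(u^2)$, $1-e^{-u}=u+O(u^2)$ and $\sigma(u)^2 = 1-e^{-2u} = 2u+O(u^2)$, one obtains
\[
\omega e^{-u}+\sigma(u)\tilde{\omega}+(1-e^{-u})\varphi = \omega + u(\varphi-\omega) + \sigma(u)\tilde{\omega} + o(u).
\]
A second-order Fr\'echet expansion of $f$ followed by integration against $W^{0,0}_{-T,T}$ then gives, using that $\tilde{\omega}\mapsto \tilde{\omega}$ is centred under $W^{0,0}_{-T,T}$,
\[
S_uf(\omega)-f(\omega) = u\,Df(\omega)[\varphi-\omega]+\tfrac{\sigma(u)^2}{2}\int_{\Omega^{0,0}_{-T,T}} D^2f(\omega)[\tilde{\omega}]\,W^{0,0}_{-T,T}(d\tilde{\omega})+o(u),
\]
and dividing by $u$ at $u\downarrow 0$ yields \eqref{def:SPDEgenbrbr}.

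For (ii), I would first check that $\LBBR$ is $S_u$-invariant. Writing a sample of $\LBBR$ as $\omega=\varphi+Y$ with $Y$ distributed according to $W^{0,0}_{-T,T}$ and independent of $\tilde{\omega}$, the image reads
\[
\omega e^{-u}+\sigma(u)\tilde{\omega}+(1-e^{-u})\varphi = \varphi + e^{-u}Y+\sigma(u)\tilde{\omega},
\]
and the Gaussian $e^{-u}Y+\sigma(u)\tilde{\omega}$ has the same law as $Y$ because $e^{-2u}+\sigma(u)^2=1$. This brings $S_u$ into the form of the classical Ornstein--Uhlenbeck semigroup on $(\Omega^{0,0}_{-T,T},W^{0,0}_{-T,T})$ acting on $g(Y):=f(\varphi+Y)$, which is well known to be self-adjoint in $L^2(W^{0,0}_{-T,T})$. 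Symmetry then reads $E_{\LBBR}(f\,S_u g) = E_{\LBBR}(g\,S_u f)$, and differentiating at $u=0$ gives \eqref{IPBFOUsemigroup}; the case $f\equiv 1$ recovers $E_{\LBBR}(\scrA g)=0$.

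For (iii), the identity $\partial_u S_u f = \scrA S_u f$, combined with $S_u f(\omega) \to E_{\LBBR}(f)=0$ as $u\to\infty$ (ergodicity of the OU semigroup applied to a centred function), gives
\[
\scrA\Bigl(-\int_0^T S_u f\,du\Bigr) = f - S_T f \xrightarrow[T\to\infty]{} f,
\]
justifying \eqref{solutionsteineq}. The Lipschitz estimate is immediate from the explicit form of $S_u$: for $1$-Lipschitz $f$ the two arguments of $f$ differ exactly by $e^{-u}(\omega_1-\omega_2)$, so
\[
|S_uf(\omega_1) - S_uf(\omega_2)| \leq e^{-u}\|\omega_1-\omega_2\|_\infty,
\]
and integration in $u\in[0,\infty)$ yields $|g(\omega_1)-g(\omega_2)|\leq \|\omega_1-\omega_2\|_\infty$.

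For (iv), the key observation is that the correction term in \eqref{eq:Langevinbridgegen2} is the carr\'e du champ of $\scrA$ evaluated at $V(\omega) := \int_{-T}^T \scrU(s,\omega_s)\,ds$ and $f$. Indeed, reading off from \eqref{def:SPDEgenbrbr} and the product rule one has
\[
\Gamma(V,f)(\omega) = \int_{\Omega^{0,0}_{-T,T}} DV(\omega)[\tilde{\omega}]\,Df(\omega)[\tilde{\omega}]\,W^{0,0}_{-T,T}(d\tilde{\omega}),
\]
and since $DV(\omega)[\tilde{\omega}] = \int_{-T}^T \nabla\scrU(s,\omega_s)\cdot\tilde{\omega}_s\,ds$, indeed $\scrA_{\scrU} f = \scrA f - \Gamma(V,f)$. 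Combining $\frac{dP^{x,y}_{-T,T}}{dW^{x,y}_{-T,T}} = Z^{-1}e^{-V}$, the IBP identity $E_{\LBBR}(h\,\scrA f) = -E_{\LBBR}(\Gamma(h,f))$ (a direct consequence of (ii) together with the derivation rule $\scrA(hf) = h\scrA f + f\scrA h + 2\Gamma(h,f)$), and the chain rule $\Gamma(e^{-V},f) = -e^{-V}\Gamma(V,f)$, one obtains
\[
E_{\LBR}(\scrA f) = Z^{-1}E_{\LBBR}(e^{-V}\scrA f) = -Z^{-1}E_{\LBBR}(\Gamma(e^{-V},f)) = E_{\LBR}(\Gamma(V,f)),
\]
which rearranges to \eqref{eq:Langevinbridgegen1}. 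The delicate step of the whole argument is the symmetry part of (ii), for which the Gaussian structure of the Brownian bridge is essential; once that is in hand, the remaining items are straightforward manipulations of the Mehler formula together with the standard Bakry--\'Emery calculus transported to the Wiener-space setting.
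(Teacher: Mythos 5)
Your proposal is correct and follows essentially the same route as the paper: a Mehler-type Ornstein--Uhlenbeck semigroup with $\LBBR$ as invariant/reversible measure, reversibility of $S_u$ under a Gaussian rotation (you phrase it via the translation $\omega=\varphi+Y$, the paper directly exhibits the measure-preserving rotation $(\omega,\tilde\omega)\mapsto(\sigma(u)\omega-e^{-u}\tilde\omega,\,e^{-u}\omega+\sigma(u)\tilde\omega)$ of $W^{0,0}_{-T,T}\otimes W^{0,0}_{-T,T}$), the resolvent formula for the Stein solution, and the product/chain rule for $\scrA$ applied to the Girsanov density $e^{-V}/Z$; your carr\'e-du-champ formulation in (iv) is exactly the explicit computation the paper carries out, and your Taylor-expansion derivation of (i) makes explicit what the paper delegates to \cite{barbour1990stein}.
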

\begin{proof}
To ease the notation, we only prove the Lemma for $x=y=0$. The case of general $x,y$ can be proven along the same lines with minimal changes.
The proof of (i) amounts to the observation that the discussion preceding Theorem 1 in \cite{barbour1990stein} is valid for any Gaussian process and not just the Brownian motion: for this reason, we skip the details here.
To prove $(ii)$ we show that 
\bes
E_{W^{0,0}_{-T,T}}(f S_u g ) = E_{W^{0,0}_{-T,T}}(S_u f  g ) 
\ees
from which the conclusion follows.
Observe that we can rewrite:
\bes
E_{W^{0,0}_{-T,T}}(f S_u g ) = \int_{\Omega^{0,0}_{-T,T} \times \Omega^{0,0}_{-T,T}}f(\omega) g(e^{-u} \omega + \sigma(u) \tomega ) W^{0,0}_{-T,T} \otimes W^{0,0}_{-T,T}(d \omega \, d \tomega).
\ees
As it can be verified with a direct computation, image measure of $W^{0,0}_{-T,T} \otimes W^{0,0}_{-T,T} $
under the mapping 
\bes
(\omega, \tomega) \mapsto (\sigma(u) \omega - e^{-u} \tomega , e^{-u} \omega +  \sigma(u) \tomega):=(\omega_1,\tomega_1)
\ees
is again $W^{0,0}_{-T,T} \otimes W^{0,0}_{-T,T}$. By definition we have $ \omega = e^{-u} \tomega_1 + \sigma(u) \omega_1 $, so that
\beas
&{}&\int f(\omega) g(\exp(-u) \omega + \sigma(u) \tomega ) W^{0,0}_{-T,T}\otimes W^{0,0}_{-T,T}(d \omega \, d \tomega)\\
&=&\int f(e^{-u}\tomega_1 + \sigma(u) \omega_1) g(\tomega_1 ) W^{0,0}_{-T,T} \otimes W^{0,0}_{-T,T}(d \omega_1 \, d \tomega_1)\\
&=& E_{W^{0,0}_{-T,T}}((S_u f)g)
\eeas
which is the desired result. Let us prove (iii): the proof that $g$ is well defined and solves the Stein equation is a standard computation using the fact that $ W^{0,0}_{-T,T}$ is the reversible measure for $\scrA$. The fact that $g$ is 1-Lipschitz also follows from the definition of $g$ and the explicit expression for $S_u$ given at \eqref{SPDEsemigroup}.

To prove (iv) we recall the Girsanov Theorem for bridges:  
\bes
\frac{dP^{0,0}_{-T,T}}{dW^{0,0}_{-T,T}} =  \frac{1}{Z}\exp(- \int_{-T}^{T} \scrU(s,\omega_s)ds ):=M
\ees
Using the product rule and the integration by parts for the invariant measure we have for all $f$:
\beas 
0 &=& E_{W^{0,0}_{-T,T}}\Big(\scrA(fM)\Big) = E_{W^{0,0}_{-T,T}}\Big(M \scrA f + f \scrA M + 2 \int_{\Omega^{0,0}_{-T,T}} Df[\tomega]DM[\tomega] W^{0,0}_{-T,T}(d \tomega) \Big) \\
&=& 2 E_{W^{0,0}_{-T,T}}\Big( M \{ \scrA f+  \int_{\Omega^{0,0}_{-T,T}} Df[\tomega]D \log M[\tomega]  W^{0,0}_{-T,T}(d \tomega) \} \Big)\\
&=& 2 E_{P^{0,0}_{-T,T}}\Big( \scrA f+   \int_{\Omega^{0,0}_{-T,T}} Df[\tomega]D\log M[\tomega]  W^{0,0}_{-T,T}(d \tomega) \Big)
\eeas
It can be checked with a direct computation that:
\bes
D \log M(\omega)[\tomega] = -\int_{-T}^T \nabla \scrU (s,\omega_s) \cdot \tomega_s ds
\ees
which gives the conclusion.

\end{proof}
At this point, the proof of Theorem \ref{thm:Stein} is almost straightforward.

\begin{proof}
Let $f$ be 1-Lipschitz and $x,y$ be fixed. Then we have, using \eqref{solutionsteineq}, \eqref{eq:Langevinbridgegen1} and \eqref{eq:Langevinbridgegen2}:
\beas
&{}&|E_{\LBR}(f)-E_{W^{x,y}_{-T,T}}(f)| =| E_{\LBR}(\scrA g)|= | E_{\LBR}(\scrA g - \scrA_{\scrU} g )| \\
&=& \int_{\OMB \times \Omega^{0,0}_{-T,T}}Dg(\omega)[\tomega] \int_{-T}^{T} \nabla\scrU(s,\omega_s) \cdot \tomega_s ds \, \LBR(d \omega) \otimes W^{0,0}_{-T,T}(d \tilde{\omega}) 
\eeas
Since $g$ is 1-Lipschitz we have: $ |Dg(\omega)[\tomega]| \leq \|\tomega \|_{\infty}$. Moreover, using Cauchy-Schwartz inequality we have $|\nabla\scrU(s,\omega_s)\cdot \tomega_s |\leq \| \nabla \scrU \|_{\infty}|\tomega_s|$, Therefore we get:
\beas
|E_{\LBR}(f)-E_{W^{x,y}_{-T,T}}(f)| &\leq & \| \nabla \scrU \|_{\infty} E_{W^{0,0}_{-T,T}}\Big(\| \tomega \|_{\infty} \int_{-T}^{T} |\tomega_s|ds \Big) \\
&= &\| \nabla \scrU \|_{\infty} T E_{W^{0,0}_{-T,T}}\Big(\sup_{s \in [-1,1]  }|\tomega_{Ts}| \int_{-1}^{1} |\tomega_{Ts}|ds \Big)
\eeas
Define now $Z_s(\tilde{\omega}) \frac{1}{\sqrt{T}}S_T(\tilde{\omega})$ (see point $(iii)$ of lemma \ref{lemma:OUbridge} for the definition of $S_t$). It is 
easy to check that $ Z_T \# W^{0,0}_{-T,T} = W^{0,0}_{-1,1}$. Using this distributional equality in the above equation proves \eqref{eq:steinbridgebound}. To pass to a general $\mu \in \cP(\RD \times \RD)$ we observe that, because of the fact that $(\omega_{-T},\omega_T) \# \PMU = (\omega_{-T},\omega_T) \# W^{\mu}_{-T,T}=\mu$ we have for any $1$-Lipschitz function $f$:
\beas
|E_{\PMU}(f)- E_{W^{\mu}_{-T,T}}(f) |= |\int_{\RD \times \RD}E_{\LBR}(f)-E_{\LBBR}(f) \mu(dx dy)| \\
\leq |\int_{\RD \times \RD}W_1(\LBR,\LBBR)\mu(dx dy)| 
\eeas
Using the bound on the Wasserstein distance for the bridge case, which does not depend on $x,y$, \eqref{eq:steinreciprocalbound} follows.
\end{proof}

\subsection{Proof of Theorem \ref{theorem:invmeas}}
We begin by proving point (i).
\begin{proof}
First, let us observe that we can reduce ourselves to show that $\omega_0 \# P^{x,y}_{-T,T} \rightarrow m$ for all $x,y \in \RD$.
Next, we claim that if we can find a convex potential $V$ such that 
$\nabla \mathscr{V}(z) = \nabla \mathscr{U}(z)$ everywhere on $\RD$, the existence of $m$ is proven.  To see this, assume that such $V$ exists and denote by $Q^{x,y}_{-T,T}$ the bridge of the Langevin dynamics associated with $V$. It is well known (see e.g. \cite{Kre88},\cite{Cl91}) that:
\bes 
\forall \, x,y \in \RD, T>0 \quad Q^{x,y}_{-T,T} = P^{x,y}_{-T,T} .\ees

  If we denote by $q_t(x,y)$ the transition  kernel of $\partial_t- \frac{1}{2} \Delta + \nabla V \cdot \nabla $, we have that the density of  $\omega_0 \# P^{x,y}_{-T,T}$ ($=\omega_0 \# Q^{x,y}_{-T,T}$) w.r.t. to the Lebesgue measure is:
\bes
 \varphi^{x,y}_{-T,T}(z)= \frac{q_{T}(x,z) q_{T}(z,y)}{q_{2T}(x,y)}
\ees
Since $V$ is convex, the corresponding Langevin dynamics converges to the invariant measure $m = \frac{1}{Z}\exp(-2V)  \lambda$. In particular
\bes
q_T(x,z) \rightarrow \frac{1}{Z}\exp(-2V(z)),  \quad q_T(z,y) \rightarrow \frac{1}{Z}\exp(-2V(y)),  
\quad q_T(z,y) \rightarrow \frac{1}{Z}\exp(-2V(y))  
\ees
and therefore $\varphi^{x,y}_{-T,T}(z) \rightarrow \frac{1}{Z}\exp(-2V(z))$, proving the existence of an invariant measure. 
Now we show that if $\mathscr{U}$ satisfies \eqref{eq:convass}, such potential $V$ can be found.
Consider the Schr\"odinger operator $- \frac{1}{2}\Delta  + \mathscr{U}$.
Since $\scrU$ is uniformly convex, $\lim_{|z| \rightarrow +\infty} \scrU(z) = +\infty$. This is known to be a sufficient condition to ensure the existence of a ground state $\psi$ for this operator. We call $k$ the principal eigenvalue.
 Since the ground state is everywhere positive, $V:= -\log \psi$ is well defined.
A simple manipulation of the equation $ - \frac{1}{2}\Delta \psi + \mathscr{U} \psi = k \psi $ shows that $V$ solves
\bes
\frac{1}{2}|\nabla V |^2 - \frac{1}{2}\Delta V + k =\mathscr{U} 
\ees
But this exactly means that $  \mathscr{V}+ k =\mathscr{U}$ and therefore $\nabla \mathscr{V} = \nabla \mathscr{U}$. Thanks to the convexity assumption \eqref{eq:convass} we are entitled to apply Theorem 6.1 of \cite{BrLieb76}. It tells that $\psi(z) = \exp(-1/2 \alpha |z|^2)\phi(z)$, where $\phi$ is log-concave. This immediately gives the desired conclusion about convexity of $V$.
\end{proof}

Let us introduce some notation for the proof of $(ii)$.
 
 We denote the typical element of $\RD \times \RD$ by $(x_1,x_2)$ and couplings of measures $(\delta,\rho)$ on $\RD$ by $\pi$.
 The set of all couplings is denoted $\Pi(\delta,\rho)$.
We denote $((\tilde{x}_1,\tilde{y}_1),(\tilde{x}_2,\tilde{y}_2))$ the typical element of $(\RD\times\RD) \times (\RD \times \RD)$. A coupling of two probability measures $\mu$ and $\nu$ on $\RD \times \RD$ is denoted $\tilde{\pi}$ and $\tilde{\Pi}(\mu,\nu)$ is the set of couplings. 

Recall the definition of $p$-Wasserstein distance of two measures on $\RD$, for $p \geq 1$ : 

\be\label{def:pwasser}
W_p(\delta,\rho ) = \inf_{\pi \in \Pi(\delta,\rho)}\left(\int |x_2-x_1|^p  \, d\pi\right)^{1/p}
\ee

The $p$-Wasserstein distance $\tilde{W}_p(\cdot,\cdot)$ for measures on $\RD \times \RD$ is defined in the same way.
We can now prove (ii).

\begin{proof}
Fix $p\geq 1$ and let $\tilde{\pi}$ be an optimal coupling for $\tilde{W}_p(\mu,\nu)$ . Then  define $\pi \in \Pi( \omega_0 \# P^{\mu}_{-T,T},\omega_0 \# P^{\nu}_{-T,T})$ via
\bes
\int_{\RD \times \RD} f(x_1,x_2) d \pi  = \int E(f(X^{\tilde{x}_1,\tilde{y}_1,T}_0,X^{\tilde{x}_2,\tilde{y}_2,T}_0)) d \tilde{\pi}
\ees
where $(X^{\tilde{x}_1,\tilde{y}_1,T},X^{\tilde{x}_2,\tilde{y}_2,T})$ is a coupling of $ P^{\tilde{x}_1,\tilde{y}_1}_{-T,T}$ and $P^{\tilde{x}_2,\tilde{y}_2}_{-T,T} $ such that
\bes
| X^{\tilde{x}_1,\tilde{y}_1,T}_t - X^{\tilde{x}_2,\tilde{y}_2,T}_t| \leq  \frac{\sinh(\alpha(T-t))}{\sinh(2\alpha T)} |\tilde{x}_2-\tx_1| + \frac{\sinh(\alpha (T+t) ) }{ \sinh(2\alpha T)} |\ty_2-\ty_1|  \quad \forall \, t \in [-T,T]
\ees
holds almost surely. Such a coupling can be constructed thanks to Theorem \ref{thm:grest}.
It is easy to see that $\pi \in \Pi(\omega_0 \# P^{\mu}_{-T,T},\omega_0 \# P^{\nu}_{-T,T})$.
But then:
\beas
W_p(\omega_0 \# P^{\mu}_{-T,T},\omega_0 \# P^{\nu}_{-T,T})^p&\leq & \int |x_2-x_1|^p d \pi \\
&=& \int E(|X^{\tilde{x}_2,\tilde{y}_2,T}_0-X^{\tilde{x}_1,\tilde{y}_1,T}_0|^p) d \tilde{\pi} \\
& \leq &
\frac{\sinh(\alpha T)   }{ \sinh(2\alpha T)}^p \int \left[ |\tx_2-\tx_1| +  |\ty_2 - \ty_1| \right]^p d \tilde{\pi} \\
&=&(2 \cosh(\alpha T))^{-p} \,2^{p/2}  \int  |(\tx_2,\ty_2)-(\tx_1,\ty_2)|^p d \tilde{\pi} \\
&=&(\sqrt{2} \cosh(\alpha T))^{-p} \, \,\tilde{W}_p(\mu,\nu)^p
\eeas
which proves \eqref{eq:contrest2}. To prove \eqref{eq:contrest} we first observe that $\hat{P}^m$ is well defined as $V$ is convex, (see \eqref{eq:invconvest}). By construction, $ \hat{P}^{m}_{-T,T}:=((\omega_{t})_{t \in [-T,T]}) \# \hat{P}^m \in \mathfrak{R}_T(U)$ because $\nabla \scrU = \nabla \scrV $. The conclusion  follows with application of the estimate \eqref{eq:contrest2} to $P^{\mu}_{-T,T}$ and $\hat{P}^{m}_{-T,T}$.
\end{proof}
\subsection{Proof of Proposition \ref{label:normapprox}}
\begin{proof}

We first show that for any $Q \in \cP(\Omega_{-T,T})$
\be\label{eq:projectcost}
C(Q,P^{\mu}_{-T}) \geq C(q,p)
\ee
where $q=(\omega_{-T},\omega_T)\# Q$ and $p= (\omega_{-T},\omega_T)\#P^{\mu}_{-T} $. Indeed we have, using conditional Jensen's inequality:
\beas
E_{P^{\mu}_{-T}}\Big(c\Big(\frac{dQ}{dP^{\mu}_{-T}}\Big) \Big) 	&=& E_{p}\Big(E_{P^{\mu}_{-T}}\Big[c(\frac{dQ}{d P^{\mu}_{-T}})\Big|\, \omega_{-T},\omega_T \Big]  \Big)			\\
								&\geq & E_{p}\Big(c\Big(E_{P^{\mu}_{-T}}\Big[\frac{dQ}{dP^{\mu}_{-T}}\Big| \, \omega_{-T},\omega_T \Big]\Big)  \Big)	\\
								&=&	E_{p}\Big(c\Big( \frac{d q}{dp} \Big)  \Big) =C(q,p)	
\eeas
Next, we shall show that:
\be\label{eq:projectreciproc}
Q \ll P^{\mu}_{-T}, Q \in \mathfrak{R}_T(U) \Rightarrow C(Q,P^{\mu}_{-T}) = C(q,p).
\ee
Indeed, under the above assumption $\frac{d Q}{d P^{\mu}_{-T}}$ is $(\omega_{-T},\omega_T)$-measurable, see e.g. \cite[Thm 2.13]{LRZ}. Therefore:
\bes
E_{P^{\mu}_{-T}}\Big(c\Big(\frac{dQ}{dP^{\mu}_{-T}}\Big) \Big) 	= E_{p}\Big(c\Big(E_{P^{\mu}_{-T}}\Big[\frac{dQ}{dP^{\mu}_{-T}}\Big| \omega_{-T},\omega_T \Big] \Big) \Big)	= E_{p}\Big(c\Big(\frac{dq}{dp} \Big) \Big) = c\big(q,p\big)
\ees
Let now $\hat{Q}$ be the optimal solution of \eqref{approxprob}: such solution has to be unique as $c$ is strictly convex. Consider now the measure $P^{\hat{\pi}}_{-T,T}$ (see \eqref{eq:PMU} for the definition ) where $\hat{\pi}= (\omega_{-T},\omega_T) \#\hat{Q}$. 
By construction $P^{\hat{\pi}}_{-T,T} \in \mathfrak{R}_T(U)$, $P^{\hat{\pi}}_{-T,T} \ll P^{\mu}_{-T}$ and $(\omega_{-T},\omega_T) \# \hat{Q} =(\omega_{-T},\omega_T)\# P^{\hat{\pi}}_{-T,T} = \hat{\pi} $. Using this, equation \eqref{eq:projectcost} and then \eqref{eq:projectreciproc}:
\bes
c(\hat{Q},P^{\mu}_{-T}) \geq c(\hat{\pi},p) = c(P^{\hat{\pi}}_{-T,T},P^{\mu}_{-T})
\ees
By optimality of $\hat{Q}$ and uniqueness,  it must be that $\hat{Q}=P^{\hat{\pi}}{-T,T}$ and hence $\hat{Q} \in \mathfrak{R}_T(U)$.
\end{proof}

\bibliographystyle{plain}
\bibliography{../command&Ref/Ref}

\end{document}